\newtheorem{thmx}{Theorem}
\newtheorem{thm}{Theorem}[section]
\newtheorem{prop}[thm]{Proposition}
\newtheorem{cor}[thm]{Corollary}
\newtheorem{lem}[thm]{Lemma}
\newtheorem{pro}[thm]{Problem}
\newtheorem{rem}[thm]{Remark}
\newcommand{\Z}{\mathbb{Z}}
\newcommand{\SO}{\mathrm{\SO}}
\title[Mapping degrees  between $S^1$-bundles]
{ Maps  between circle bundles: 
 Fiber-preserving, Finiteness and Realization of mapping degree sets}
\author[C. Neofytidis]{Christoforos Neofytidis}
\address{Department of Mathematics and Statistics, University of Cyprus, Nicosia 1678, Cyprus}
\email{neofytidis.christoforos@ucy.ac.cy}
\author[H. Sun]{Hongbin Sun}
\address{Department of Mathematics, Rutgers University - New Brunswick, Hill Center, Busch Campus, Piscataway, NJ 08854, USA}
\email{hongbin.sun@rutgers.edu}
\author[Y. Tian]{Ye Tian}
\address{Morningside Center of Mathematics, 
Academy of Mathematics and System Science, 
Chinese Academy of Sciences,
Beijing 100190, China}
\email{ytian@math.ac.cn}
\author[S. Wang]{Shicheng Wang}
\address{Department of Mathematical Sciences, Peking University, Beijing 100871, China}
\email{wangsc@math.pku.edu.cn}
\author[Z. Wang]{Zhongzi Wang}
\address{Department of Mathematical Sciences, Peking University, Beijing 100871, China}
\email{wangzz22@stu.pku.edu.cn}
\date{\today}
\subjclass[2010]{55M25}
\keywords{Mapping degrees, circle bundles, fiber-preserving map, Euler class, finiteness and realization problems}
\begin{document}

\maketitle

\begin{abstract} Let $E_i$ be an oriented circle bundle over a closed  oriented aspherical $n$-manifold $M_i$ with Euler class $e_i\in H^2(M_i;\mathbb{Z})$, $i=1,2$.  
 We prove the following:
 \begin{itemize}
 
 \item[(i)] If every finite-index subgroup of $\pi_1(M_2)$ has trivial center, then any non-zero degree  map from $E_1$ to $E_2$ is homotopic to a  fiber-preserving map.
 
\item[(ii)] The mapping degree set of fiber-preserving maps from $E_1$ to $E_2$ is given by
$$\{0\} \cup\{k\cdot \deg(f) \ |  \, k\ne 0,  f\colon M_1\to M_2 \, \text{with} \, \deg(f)\ne 0 \ \text{such that}\, f^\#(e_2)=ke_1\},$$
where $f^\# \colon H^2(M_2;\mathbb{Z})\to H^2(M_1;\mathbb{Z})$ is  the induced homomorphism.

\end{itemize}

As applications of (i) and (ii), we obtain the following results with respect to the finiteness and the realization problems for mapping degree sets:
 \begin{itemize}
\item[($\mathcal F$)]  The mapping degree set  $D(E_1, E_2)$ is finite if $M_2$ is hyperbolic and $e_2$ is not torsion.
\item[($\mathcal R$)] For any finite set $A$ of integers containing $0$ and each $n>2$, $A$ is the mapping degree set $D(M,N)$
for some closed oriented $n$-manifolds 
$M$ and $N$. 
\end{itemize}

Items (i) and ($\mathcal F$) extend  in all dimensions $\geq 3$ the previously known $3$-dimensional case (i.e., for maps between circle bundles over hyperbolic surfaces). 
Item ($\mathcal R$) gives a complete answer to the realization problem for finite sets (containing $0$) in any dimension, establishing in particular the previously unknown cases in dimensions $n= 4, 5$.
\end{abstract}

\tableofcontents
\section{Introduction}
Let $M,N$ be two closed, connected,  oriented manifolds  of the same dimension.
The degree of a map $f\colon M\to N$, denoted by $\deg(f)$, is 
probably one of the  oldest and most fundamental concepts in topology.
The {\em set of mapping degrees} ({\em or  degrees of maps}) from $M$ to $N$ is defined by
\[
D(M,N):=\{d\in\Z \ | \ \exists \ f\colon M\to N, \ \deg(f)=d\}.
\]
 When $M=N$, the {\em set of degrees of self-maps} $D(M,M)$ is denoted by $D(M)$.

 The study of mapping degree sets, including 
the finiteness of the set $D(M,N)$,  whether $\pm 1\in D(M,N)$,  as well as the exact computation of $D(M,N)$,  for various classes of manifolds $M$ and $N$, has a long history. This topic was revolutionised by Thurston~\cite{Th} and Gromov~\cite{Gr} more than 40 years ago and among the most prominent notions arising from these profound works was the {\em simplicial volume}. Thereafter, research on these questions has become very active, rich in open problems,  applications, and involving various methods from geometric topology, algebraic topology, 
differential geometry, representation theory, analysis and others. For a (highly non-exhaustive) list of references featuring the variety of these techniques, we refer the reader to  
\cite{BG}, 
 \cite{LS}, 
 \cite{DLSW}, \cite{CMV},  \cite{Ne}, 
\cite{BGM}, 
and the bibliography therein. 
   
In this paper, we mainly study maps between circle bundles of the same dimension and their mapping degree sets. In the following three subsections we will describe our results and their applications.

\subsection{Fiber-preserving maps} A deep  problem in manifold topology is to find a fine representative in the homotopy class of a non-zero degree map $f\colon M\to N$; for samples in dimensions two see \cite{Ed}, and in dimension three see \cite{Wal}. 
In the aspherical setting, typical examples of such problems include the following: {\em If $f_*\colon \pi_1(M)\to \pi_1(N)$ is an injection,
can one find a covering in the homotopy class of $f$?  
If $M$ and $N$ are fiber bundles of the same type and $f_*\colon \pi_1(M)\to \pi_1(N)$ preserves the fiber group, can one find a fiber-preserving map in the homotopy class of $f$?} 

Fiber bundles are quite general in dimension three, and seven of the eight geometries in Thurston's geometrization picture
are related to fiber bundles.
Moreover, non-zero degree maps between those bundles are often homotopic to  fiber-preserving ones;
for torus bundles see \cite{Ha1}, and for circle bundles see \cite{Ro}. These facts  play an essential role in the study of mapping degree sets of $3$-manifolds, for more details see a survey \cite{SWWZ}.

$S^1$-bundles are primary examples of fiber bundles
and constitute a significant class of manifolds.
Any non-zero degree map between $S^1$-bundles over hyperbolic surfaces is 
homotopic to a fiber-preserving one \cite{Ro}. Our first goal in this paper is to extend this result from surfaces to arbitrary dimensional manifolds in Theorem \ref{main2} below. The necessary group-theoretic property which includes the word ``hyperbolic" is the following: A group $G$ is said to be {\em strongly center-free}, {\em SCF} in short, if each finite-index subgroup of $G$ has trivial center.

\begin{thm}\label{main2}
For  $i=1,2$,  let  $M_i$ be a closed oriented aspherical $n$-manifold, such that $\pi_1(M_2)$ is SCF, and let 
$E_i \to M_i$ be an oriented $S^1$-bundle. 
Then for any map $ f\colon  E_1\to E_2$  of non-zero degree, 
there is  a fiber-preserving map in the homotopy class of $f$.
\end{thm}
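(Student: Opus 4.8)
The plan is to pass entirely to fundamental groups. First I would record that, since each $M_i$ is aspherical, the homotopy exact sequence of the bundle $S^1\to E_i\to M_i$ has vanishing boundary map (because $\pi_2(M_i)=0$), so the fiber $\Z$ injects and $E_i$ is itself aspherical, sitting in a central extension $1\to \Z_i\to \pi_1(E_i)\to \pi_1(M_i)\to 1$ whose extension class in $H^2(\pi_1(M_i);\Z)=H^2(M_i;\Z)$ is the Euler class $e_i$; centrality holds because the structure group $\mathrm{SO}(2)$ is connected, so the monodromy on $\pi_1(S^1)$ is trivial. Since $E_2$ is aspherical, $f$ is determined up to homotopy by the conjugacy class of $f_*\colon \pi_1(E_1)\to\pi_1(E_2)$, and a fiber-preserving representative exists precisely when $f_*(\Z_1)\subseteq \Z_2$: in that case $f_*$ descends to $\bar\phi\colon \pi_1(M_1)\to \pi_1(M_2)$, and the whole problem becomes one about these two extensions.

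The heart of the argument is to prove $f_*(\Z_1)\subseteq \Z_2$, and this is where both hypotheses enter. Let $c_1$ generate $\Z_1$ and let $p_2\colon \pi_1(E_2)\to\pi_1(M_2)$ be the projection. Since $\deg(f)\ne 0$ and $E_1,E_2$ are closed oriented $(n+1)$-manifolds, the image $H:=\im f_*$ has finite index in $\pi_1(E_2)$ (a map from a closed oriented manifold into a covering of infinite index, hence noncompact, has degree zero). Consequently $p_2(H)$ has finite index in $\pi_1(M_2)$. Now $c_1$ is central in $\pi_1(E_1)$, so $f_*(c_1)$ is central in $H$, whence $p_2(f_*(c_1))$ is central in $p_2(H)$. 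But $\pi_1(M_2)$ is SCF, so its finite-index subgroup $p_2(H)$ has trivial center; therefore $p_2(f_*(c_1))=1$, i.e. $f_*(c_1)\in\ker p_2=\Z_2$. This single use of the SCF condition is the crucial step, and the one I expect to be the real content of the theorem.

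Given $f_*(\Z_1)\subseteq \Z_2$, write $f_*(c_1)=c_2^{k}$ and realize the induced $\bar\phi\colon\pi_1(M_1)\to\pi_1(M_2)$ by a map $g\colon M_1\to M_2$, which is possible and unique up to homotopy since $M_2$ is aspherical. Viewing $f_*$ as a morphism of central extensions covering $\bar\phi$ and equal to multiplication by $k$ on the fibers, the standard compatibility of extension classes gives $g^\#(e_2)=\bar\phi^*(e_2)=k\,e_1$ in $H^2(M_1;\Z)$. This Euler-class identity is exactly the condition for a fiberwise degree-$k$ bundle map $E_1\to g^*E_2$ (a ``$z\mapsto z^{k}$'' on fibers) to exist over $\id_{M_1}$; composing it with the canonical bundle map $g^*E_2\to E_2$ over $g$ yields a fiber-preserving $F\colon E_1\to E_2$.

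Finally I would check $F\simeq f$. By construction $F$ covers $g$ and is degree $k$ on fibers, so $F_*$ and $f_*$ induce the same $\bar\phi$ and the same map $\times k$ on the $\Z_i$; two such morphisms of central extensions differ only by a homomorphism $\pi_1(M_1)\to\Z$, i.e. by an element of $H^1(M_1;\Z)$. Since fiber-preserving maps over $g$ of fiber-degree $k$ form a torsor over $[M_1,S^1]=H^1(M_1;\Z)$ under fiberwise ``rotation'' twisting, I can adjust $F$ by the appropriate twist so that $F_*=f_*$ up to conjugacy, and then asphericity of $E_2$ gives $F\simeq f$. The only genuinely delicate points are the finite-index/centrality step above and the bookkeeping matching the two $H^1(M_1;\Z)$-ambiguities on the topological and group-theoretic sides; the rest is formal.
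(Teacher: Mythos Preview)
Your proposal is correct and follows essentially the same route as the paper: the paper's Step~0 is exactly your finite-index/SCF argument that $f_*(\Z_1)\subseteq\Z_2$, and its Steps~I--III (pull back along $g$, pass to the $k$-fold fiber cover, match Euler classes, and correct by a gauge element in $[M_1,S^1]=H^1(M_1;\Z)$) amount to the same construction you describe, just factored into separate lemmas (their Lemma~2.2 and Proposition~2.3 are precisely your ``two morphisms differ by $\lambda\in\mathrm{Hom}(\pi_1(M_1),\Z)$'' and ``realize the twist by a bundle automorphism'' steps). The only point the paper treats more carefully than your sketch is the verification that the topological $H^1$-twist really induces the prescribed $\lambda$ on $\pi_1$, which they check by an explicit path computation; you flag this as the delicate bookkeeping, and it is.
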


\begin{rem} \

(1) The  conditions posed, in both  \cite{Ro} and  Theorem \ref{main2}, force $f_*$ to preserve the fiber group.
Starting from this point, the proof of  \cite{Ro} and that of  Theorem \ref{main2} are completely different:
the proof in \cite{Ro} uses  the hierarchy method in 2- and  3-manifolds, which is not available in higher dimensions. 
The proof of Theorem \ref{main2}  uses $S^1$-bundle theory, in particular that $S^1$-bundles over $M$ are principal bundles
and whose isomorphism classes are in 1-1 correspondence with $H^2(M, \Z)$, and for each bundle, its gauge class group is isomorphic to $[\pi_1(M), \Z]=H^1(M, \Z)$.

(2) Some examples of aspherical manifolds with SCF fundamental groups are irreducible 3-manifolds that are not Seifert manifolds and aspherical manifolds with fundamental groups non-elementary hyperbolic (such as negatively curved manifolds) or direct products of non-elementary hyperbolic groups. 

(3)  A more general condition that includes the SCF property was introduced in~\cite{NeIIPP}.
\end{rem}

With Theorem \ref{main2} in hands it becomes evident that understanding the mapping degree set $D(E_1,E_2)$, for $E_i$ as in Theorem \ref{main2} reduces to studying fiber-preserving maps. Hence, our next goal is to 
describe the   mapping degree sets of fiber-preserving maps between $S^1$-bundles.
Given two closed oriented circle bundles $E_1$ and $E_2$ of the same dimension, we define the {\em fiber-preserving  mapping degree set} as
\[
D_{FP}(E_1,E_2):=\{d\in\Z \ |\, \text{$\exists$ fiber-preserving map} \  f\colon E_1\to E_2, \ \text{such that}\ \deg( f)=d\}.
\]

\begin{thm}\label{main1} Let  $M_i$ be a closed oriented $n$-manifold and let 
$E_i \to M_i$ be an oriented $S^1$-bundle with Euler class $e_i\in H^2(M_i;\mathbb{Z})$, $i=1,2$. Then 
$$D_{FP}(E_1, E_2)=\{0\} \cup\{k\cdot \deg(f)\ |\  k\ne 0,  f\colon M_1\to M_2, \deg(f)\ne 0 \ \text{such that}\ f^\#(e_2)=ke_1\},$$
where $f^\#\colon H^2(M_2;\mathbb{Z})\to H^2(M_1;\mathbb{Z})$ is  the induced homomorphism on second cohomology.
\end{thm}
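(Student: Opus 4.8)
The plan is to split every fiber-preserving map into its two natural pieces of data — the base map $f\colon M_1\to M_2$ that it covers and its fiber degree $k$ — and to show that these satisfy $\deg\phi=k\cdot\deg f$ and $f^\#(e_2)=ke_1$, and conversely that every admissible pair $(f,k)$ is realized. So let $\phi\colon E_1\to E_2$ be fiber-preserving, covering $f$ via $\pi_2\circ\phi=f\circ\pi_1$. Restricting $\phi$ to each fiber gives a self-map of $S^1$ whose degree is locally constant, hence (as $M_1$ is connected) a well-defined integer $k$, the \emph{fiber degree}. The first step is the multiplicativity $\deg\phi=k\cdot\deg f$, which I would obtain from the projection formula for integration over the fiber: writing $\pi_{i!}$ for fiber integration, the fiber degree $k$ enters as the multiplicative factor in the naturality relation $\pi_{1!}(\phi^*\alpha)=k\cdot f^\#(\pi_{2!}\alpha)$. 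Applying this to the orientation class $u_2\in H^{n+1}(E_2;\Z)$, using $\phi^*u_2=\deg\phi\cdot u_1$ and $\pi_{i!}u_i=\pm\,\omega_{M_i}$ together with $f^\#\omega_{M_2}=\deg f\cdot\omega_{M_1}$, and matching orientations, yields $\deg\phi=k\cdot\deg f$. In particular $\deg\phi\neq 0$ forces both $k\neq 0$ and $\deg f\neq 0$, while the constant map (which is fiber-preserving over a constant base map) shows $0\in D_{FP}(E_1,E_2)$ in all cases.

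For the Euler-class constraint I would view $\phi$ as a map of the Serre spectral sequences of the two fibrations $S^1\to E_i\to M_i$. The transgression $\tau_i\colon H^1(S^1)\to H^2(M_i)$ sends the fiber generator $\sigma_i$ to the Euler class $e_i$. Since $\phi$ restricted to a fiber has degree $k$, the induced map $\left(\phi|_{S^1}\right)^*\colon H^1(S^1)\to H^1(S^1)$ is multiplication by $k$, and naturality of the transgression gives a commuting square relating $\left(\phi|_{S^1}\right)^*$ to $f^\#$. Chasing $\sigma_2$ around it gives $f^\#(e_2)=f^\#(\tau_2\sigma_2)=\tau_1(k\sigma_1)=k\,e_1$. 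This establishes the inclusion $D_{FP}(E_1,E_2)\subseteq\{0\}\cup\{k\cdot\deg f\mid k\neq 0,\ \deg f\neq 0,\ f^\#(e_2)=ke_1\}$.

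For the reverse inclusion I would realize each admissible pair. Given $f$ with $\deg f\neq 0$ and $k\neq 0$ satisfying $f^\#(e_2)=ke_1$, form the pullback $f^*E_2\to M_1$, an oriented (hence principal) $S^1$-bundle with Euler class $f^\#(e_2)=ke_1$. Using that principal $S^1$-bundles over $M_1$ are classified by their Euler class in $H^2(M_1;\Z)$, and that extending the structure group along the degree-$k$ homomorphism $\rho_k\colon S^1\to S^1$, $z\mapsto z^k$, multiplies the Euler class by $k$, I obtain an isomorphism $E_1\times_{\rho_k}S^1\cong f^*E_2$. The canonical map $E_1\to E_1\times_{\rho_k}S^1$ is fiber-preserving over $\mathrm{id}_{M_1}$ with fiber degree $k$; composing it with this isomorphism and with the tautological bundle map $f^*E_2\to E_2$ (which covers $f$ and is a fiber isomorphism) produces a fiber-preserving map $E_1\to E_2$ covering $f$ with fiber degree $k$, hence of degree $k\cdot\deg f$ by the multiplicativity of the first step. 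This exhibits every element of the right-hand set as a fiber-preserving degree.

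The step needing the most care is relating a \emph{general} fiber-preserving map — which need not be a bundle morphism — to the Euler classes. The transgression argument is precisely what makes the necessity direction go through without first rectifying $\phi$ to a $\rho_k$-equivariant map. The delicate bookkeeping is in the orientation conventions: one must check that the integer $k$ appearing in $\deg\phi=k\cdot\deg f$ is literally the same as the one appearing in $f^\#(e_2)=ke_1$, so that the degrees produced in the realization step match the constraint exactly. The only genuinely structural inputs are the classification of $S^1$-bundles by the Euler class and the effect of $\rho_k$ on it, both recalled in the Remark following Theorem \ref{main2}; everything else is formal naturality.
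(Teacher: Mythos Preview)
Your proposal is correct and reaches the same conclusion, but the technical route differs from the paper's in the necessity direction.

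The paper first factors any fiber-preserving map $\tilde f\colon \tilde M_a\to \tilde N_b$ through the pullback bundle as $\tilde f = f'\circ v$, where $v\colon \tilde M_a\to \tilde M_c$ is \emph{vertical} (covers $\mathrm{id}_{M_1}$) and $f'$ is a genuine bundle map covering $f$ (this is their Lemma~\ref{h-v}). The bundle map gives $c=f^\#(b)$ by naturality of the Euler class, and the relation $ka=c$ for the vertical piece is then established by a hands-on obstruction-theoretic argument: choosing a section over the $1$-skeleton and counting how many times $v\circ s(\partial\Delta^2)$ winds around the fiber (their Proposition~\ref{cal1}). You instead invoke naturality of the transgression in the Serre spectral sequence for the original map $\phi$ directly, without factoring; since a fiber-preserving map \emph{is} a map of fibrations, $d_2$-naturality immediately yields $f^\#(e_2)=k e_1$. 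This is cleaner and bypasses both the factorization lemma and the cochain-level computation, at the cost of being less explicit about where the integer $k$ comes from geometrically.

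For the realization direction the two arguments are the same construction in different clothing: the paper quotients the principal bundle by the $\mathbb{Z}_k$-subgroup of $S^1$ to produce the vertical degree-$k$ map $\tilde M_a\to\tilde M_{ka}$, while your extension of structure group along $\rho_k$ produces exactly this quotient. Likewise, your ``projection formula'' justification of $\deg\phi=k\cdot\deg f$ implicitly passes through the pullback factorization (that is how one proves $\pi_{1!}\phi^*=k\cdot f^\#\pi_{2!}$ when $\phi$ is not itself a pullback square), so at that point you are doing what the paper does explicitly. Your closing remark that the orientation/sign bookkeeping --- that the same $k$ appears in both $\deg\phi=k\cdot\deg f$ and $f^\#(e_2)=ke_1$ --- is the only place needing care is accurate, and the paper handles it by working with the same section $s$ on both sides of the vertical map.
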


\subsection{Finiteness of mapping degree sets}\label{intro:f}
A primary question about a mapping degree set $D(M, N)$ is whether it is finite.  We call a numerical invariant $v$ of an $n$-manifold a domination invariant,   if for any map $f\colon M\to N$, we have $v(M)\ge |\deg(f)|v(N)$.

Clearly $D(M, N)$  is finite  for any $M$ if $v(N)$ is positive and finite for some domination invariant $v$.
 The simplicial volume is  the most important domination invariant, and it is positive on hyperbolic manifolds, but it  vanishes on circle bundles.
 On the other hand, for  an $S^1$-bundle $E$ over a hyperbolic surface with non-zero Euler class, $w(E)>0$ for the Seifert volume $w$,
 a domination invariant introduced by Brooks and Goldman~\cite{BG}, thus $D(M, E)$ is finite for any $M$.
 In particular $D(E)$ is finite.    
  
 So far  there is no known a non-zero finite domination invariant on $S^1$-bundles over $n$-manifolds, $n\ge 3$.  
The   fact that $D(E)$ is finite for an $S^1$-bundle $E$ over a hyperbolic surface with non-zero Euler class has been extended in all dimensions $\geq 3$ in~\cite{Ne}, where the base of the $S^1$-bundle $E$ is an aspherical manifold with hyperbolic fundamental group, namely, $D(E)$ is finite if and only if the Euler class of $E$ is not torsion. 
  The proof in~\cite{Ne}
 uses  a group theoretic idea developed in~\cite{NeIIPP}, and  some dynamics argument, since self-maps can be iterated. 
 Note also that, since self-maps can be iterated, $D(M)$ is finite if and only if $D(M)\subset \{0, \pm 1\}$.

Theorems \ref{main2} and \ref{main1} have a first significant consequence regarding the finiteness of mapping degree sets between
$S^1$-bundles. For brevity, and in order to make a direct connection between the total space, the base space and the Euler class of a circle bundle, 
  we will often write $\tilde M_a$ to denote  the oriented circle bundle over a closed oriented $n$-manifold $M$ with Euler class $e(\tilde M_a)=a\in H^2(M;\mathbb{Z})$. 

\smallskip

Our finiteness result is the following:

\begin{thm}[Finiteness Theorem]\label{appl1}
 Suppose $N$ is a closed oriented  hyperbolic $n$-manifold and  $b\in H^2(N;\mathbb{Z})$ is  not torsion. 
 Then $D(\tilde M_a,\tilde N_b)$ is a finite set for any  closed oriented aspherical $n$-manifold $M$ and any $a\in H^2(M;\mathbb{Z})$.
\end{thm}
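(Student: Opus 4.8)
The plan is to combine Theorems \ref{main2} and \ref{main1} to reduce the computation of $D(\tilde M_a,\tilde N_b)$ to a counting problem on the base, and then to control that count using the hyperbolicity of $N$. First I would check that $\pi_1(N)$ is SCF: a finite-index subgroup of $\pi_1(N)$ is the fundamental group of a finite cover of $N$, which is again a closed hyperbolic manifold, hence a torsion-free non-elementary hyperbolic group; since the centralizer of any nontrivial element of such a group is infinite cyclic, a nontrivial center would force the whole group into a virtually cyclic subgroup, which is impossible. Thus Theorem \ref{main2} applies with $M_2=N$ and $E_2=\tilde N_b$, so every nonzero-degree map $\tilde M_a\to\tilde N_b$ is homotopic to a fiber-preserving one and therefore $D(\tilde M_a,\tilde N_b)=D_{FP}(\tilde M_a,\tilde N_b)$. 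Theorem \ref{main1} then identifies this set with $\{0\}\cup\{k\cdot\deg(f)\mid k\ne0,\ f\colon M\to N,\ \deg(f)\ne0,\ f^\#(b)=ka\}$, and it suffices to bound both factors $k$ and $\deg(f)$.

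Bounding $\deg(f)$ is immediate from the simplicial volume: since $N$ is hyperbolic, $\|N\|>0$, and every $f\colon M\to N$ satisfies $|\deg(f)|\cdot\|N\|\le\|M\|<\infty$, so $\deg(f)$ takes only finitely many values. Next I would dispose of the case where $a$ is torsion: a nonzero-degree map induces a rationally injective $f^\#\colon H^*(N;\Q)\to H^*(M;\Q)$ (from $f_*[M]=\deg(f)\,[N]$ and Poincar\'e duality), so $f^\#(b)$ is non-torsion whenever $b$ is; hence $f^\#(b)=ka$ has no solution when $a$ is torsion, and $D_{FP}=\{0\}$ is finite. So I may assume $a$ is non-torsion, in which case the relation $f^\#(b)=ka$ determines $k$ uniquely from $f^\#(b)$ and $a$ inside $H^2(M;\Q)$. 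Thus everything comes down to showing that $f^\#(b)$ ranges over a finite subset of $H^2(M;\Z)$ as $f$ runs over nonzero-degree maps of bounded degree.

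The heart of the argument, and the step I expect to be the main obstacle, is this finiteness of the classes $f^\#(b)$; equivalently, that there are only finitely many homotopy classes of nonzero-degree maps $M\to N$ (recall $f^\#$ depends only on the conjugacy class of $f_*$, as $M,N$ are aspherical). The starting point is that a nonzero-degree map to an aspherical manifold has finite-index image: if $\im(f_*)$ had infinite index, $f$ would factor up to homotopy through the corresponding infinite cover $N'$ of $N$, whose top cohomology vanishes, contradicting $f^\#\ne0$ on $H^n(-;\Q)$. Writing $m=[\pi_1(N):\im(f_*)]$ and factoring $f=p\circ\tilde f$ through the $m$-fold cover $p\colon N'\to N$, we get $\deg(f)=m\cdot\deg(\tilde f)$, so $m\le|\deg(f)|\le\|M\|/\|N\|$ is bounded; since $\pi_1(N)$ is finitely generated, only finitely many subgroups of such bounded index occur, hence only finitely many covers $N'$.

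It then remains to see that, for each such $N'$, there are only finitely many $\pi_1$-surjective maps $\tilde f\colon M\to N'$ up to conjugacy, so that $f^\#(b)=\tilde f^\#(p^\#b)$ takes finitely many values. Here I would invoke the rigidity of closed hyperbolic manifolds: $\pi_1(N')$ is a torsion-free word-hyperbolic group with finite outer automorphism group (Mostow), and the set of epimorphisms from the fixed finitely generated group $\pi_1(M)$ onto such a group is finite modulo $\mathrm{Aut}(\pi_1(N'))$, hence finite modulo conjugacy. Combined with the finitely many values of $\deg(f)$, this yields finitely many admissible products $k\cdot\deg(f)$, so $D_{FP}(\tilde M_a,\tilde N_b)$—and therefore $D(\tilde M_a,\tilde N_b)$—is finite. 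The delicate point to pin down rigorously is precisely this last finiteness of epimorphisms onto hyperbolic manifold groups up to conjugacy; the finite-index reduction above is exactly what makes the hyperbolic rigidity of $N$ available for it.
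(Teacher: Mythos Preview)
Your reduction via Theorems~\ref{main2} and~\ref{main1} to bounding $\deg(f)$ and the set of possible classes $f^{\#}(b)$ is exactly what the paper does, and your simplicial-volume bound on $\deg(f)$ is the same. The divergence is in how you control $f^{\#}(b)$, and that is where your argument is both harder than necessary and incomplete.

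The paper does \emph{not} try to count homotopy classes of nonzero-degree maps $M\to N$. Instead it fixes generators $\alpha_1,\dots,\alpha_m$ of $H_2(M;\Z)$, writes $f^{\#}(b)(\alpha_i)=b(f_{\#}(\alpha_i))$, and observes that the Gromov seminorm satisfies $\|f_{\#}(\alpha_i)\|\le\|\alpha_i\|$. The key input is that for a closed hyperbolic manifold $N$ the Gromov seminorm on $H_2(N;\R)$ is a genuine norm (the paper cites \cite{CS}); hence only finitely many integral classes in $H_2(N;\Z)$ have norm below a fixed bound, so only finitely many values of $f_{\#}(\alpha_i)$ occur, and therefore only finitely many $f^{\#}(b)$. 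This argument is short, uniform in $n$, and uses nothing about $\pi_1(N)$ beyond hyperbolicity.

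Your route instead asserts that there are only finitely many epimorphisms $\pi_1(M)\twoheadrightarrow\pi_1(N')$ up to conjugacy. You correctly flag this as the delicate point, and indeed it is not a consequence of Mostow rigidity alone: Mostow gives $|\mathrm{Out}(\pi_1(N'))|<\infty$ for $n\ge 3$, but you still need that $\mathrm{Epi}(\pi_1(M),\pi_1(N'))/\mathrm{Inn}(\pi_1(N'))$ is finite. That is a genuine theorem (of Sela/Delzant type, using that a closed hyperbolic $n$-manifold group with $n\ge 3$ admits no cyclic splitting), and you neither prove it nor cite it. Moreover, for $n=2$ the claim is simply false: surface groups have infinite $\mathrm{Out}$ and infinitely many epimorphisms between them, so your argument does not cover the surface base case that the theorem, as stated, includes. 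The paper's norm argument sidesteps all of this.
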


\subsection{The realization problem}

The problem of realizing an arbitrary set of integers (containing zero) as a mapping degree set appeared   in the literature rather recently, although it has been present, at least implicitly, over the last decades. It was shown in \cite[Theorem 1.3]{NWW} that there exist uncountably many infinite sets $A\subseteq\Z$ with $0\in A$ that are not equal to $D(M,N)$, for any closed oriented $n$-manifolds $M,N$,  so the following refined version of the realization problem arose.

 \begin{pro}\cite[Problem 1.4]{NWW}\label{p1}
 Suppose $A$ is a finite set of integers containing $0$. Are there closed oriented $n$-manifolds $M$ and $N$, such that $A=D(N,N)$?
\end{pro}

Our interest on $S^1$-bundles also stems from the fact that $S^1$-bundles over surfaces have proven to be instrumental in answering Problem \ref{p1}.
First, given a non-zero integer $k$, the set $\{0, k\}$, the smallest non-zero subset of $\mathbb{Z}$ containing 0, is realized by 
$D(\tilde \Sigma_1, \tilde \Sigma_k)$,  
where $\tilde \Sigma_k$ is the $S^1$-bundle over a closed oriented hyperbolic surface $\Sigma$ with Euler number $k$; see~\cite[Lemma 3.5]{NWW}.
Then, using connected sums of non-trivial $S^1$-bundles over hyperbolic surfaces, and their products, many finite subsets of $\mathbb{Z}$ are
realized as mapping degree sets \cite[Theorems 1.7 and 1.9]{NWW}. Soon after these results, 
 C. Costoya, V. Mu\~noz, and A. Viruel, only by using connected sums of $S^1$-bundles over surfaces, together with arithmetic combinatorics~\cite[Prop. 2.2]{CMV}, gave a complete positive answer to Problem \ref{p1} in a stronger form:
 
\begin{thmx}\label{t:CMV} \cite[Theorem A]{CMV}.
If $A$ is a finite set of integers containing $0$, then $A = D(M,N)$
for some closed oriented connected $3$-manifolds $M,N$.
\end{thmx}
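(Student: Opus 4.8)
The plan is to realize $A$ using connected sums of non-trivial oriented $S^1$-bundles over closed hyperbolic surfaces, which are exactly the building blocks singled out in the excerpt. The starting point is the base computation $D(\tilde\Sigma_1,\tilde\Sigma_k)=\{0,k\}$ from~\cite[Lemma~3.5]{NWW}; more generally, for a single bundle the specialization of Theorem~\ref{main1} to surfaces (together with~\cite{Ro}, which guarantees that $D=D_{FP}$ in this setting) expresses $D(\tilde\Sigma_j,\tilde\Sigma_k)$ in terms of the degrees of maps between hyperbolic surfaces and the divisibility constraint $f^\#(e_k)=c\,e_j$ on Euler numbers. I would fix, once and for all, base surfaces of pairwise distinct genera, so that between two non-isomorphic summands the only non-zero degree maps are the geometrically forced ones; this \emph{rigidity} is what lets me read off the degree set of a connected sum from the degree sets of its individual pieces.

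Next I would set up a degree calculus for connected sums based on two elementary principles. First, the collapse map $\rho_i\colon N_1\#\cdots\#N_m\to N_i$ onto a single summand has degree $1$; hence for any $f\colon M\to N_1\#\cdots\#N_m$ the composite $\rho_i\circ f$ has degree $\deg(f)$, which forces $\deg(f)\in\bigcap_i D(M,N_i)$ and yields the crucial \emph{upper bound}. Second, pinching a connected-sum source $M_1\#\cdots\#M_p$ onto the wedge $\bigvee_j M_j$ and then mapping each wedge factor realizes every sum $\sum_j\deg(g_j)$ with $g_j\colon M_j\to N$, which gives the \emph{lower bound} as a sumset. Combining the two, using the rigidity above and the fact that circle bundles over aspherical surfaces are closed aspherical (hence irreducible) $3$-manifolds, I would obtain a purely combinatorial formula for $D(\#_j\tilde\Sigma_{l_j},\#_i\tilde\Sigma_{k_i})$ in terms of the single-bundle data $D(\tilde\Sigma_{l_j},\tilde\Sigma_{k_i})$.

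This reduces the realization of $A$ to an arithmetic problem: choose the number of summands and the Euler numbers (of either sign, which is how I would produce negative degrees) so that the sumset generated by the source summands, \emph{intersected} with the pinch constraints coming from the several target summands, equals $A$ exactly. This is precisely the content of the arithmetic-combinatorics input~\cite[Prop.~2.2]{CMV}: since $0$ is automatically present (constant maps) and the sumset tends to overshoot $A$, the target summands must be engineered to prune the unwanted sums while every prescribed element of $A$ is made to survive. I expect this balancing act to be the main obstacle, since the lower bound (producing a map of each desired degree) and the sharp upper bound (excluding all other degrees) pull in opposite directions. The upper bound is the delicate half: it rests on the concentration phenomenon for maps into connected sums of aspherical $3$-manifolds and on the genus-rigidity of the summands to rule out accidental cross-degrees, so that the connected-sum degree set is \emph{exactly} the combinatorial set rather than merely containing it.
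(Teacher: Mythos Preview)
The paper does not itself prove Theorem~\ref{t:CMV}; it is quoted from \cite{CMV}. However, the proof of Theorem~\ref{C} reproduces the \cite{CMV} strategy verbatim in dimensions $4$ and $5$, so one can read off what the $3$-dimensional argument actually looks like and compare it with your sketch.

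Your overall architecture is right: one writes $A=\bigcap_{i=1}^k S_{B(i)}$ via \cite[Prop.~2.2]{CMV}, builds source and target as connected sums of nontrivial circle bundles over a hyperbolic surface, and uses the collapse/pinch calculus for connected sums (together with asphericity of each target summand) to identify the degree set as $\bigcap_i D(M_i,N_i)$. The two places where your plan diverges from \cite{CMV} are worth flagging.

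First, your proposed rigidity mechanism---using base surfaces of \emph{pairwise distinct genera}---does not do what you need. There are plenty of nonzero-degree maps between hyperbolic surfaces of different genera (any branched cover from higher to lower genus), and these lift to nonzero-degree fiber-preserving maps between the corresponding circle bundles whenever the Euler-number divisibility allows. So distinct genera do not force $D(M_i,N_j)=\{0\}$ for $i\ne j$, and without that your intersection $\bigcap_i D(M,N_i)$ will in general be strictly larger than $A$. In \cite{CMV} (and in the paper's proof of Theorem~\ref{C}) one instead fixes a \emph{single} base and separates the blocks arithmetically: choose distinct primes $p_1,\dots,p_k$ larger than every $|\beta|$ occurring, set $\alpha_i=p_i\prod_{\beta\in B(i)}\beta$, and take $N_i=\tilde\Sigma_{\alpha_i}$ and $M_i=\#_{\beta\in B(i)}\tilde\Sigma_{\alpha_i/\beta}$. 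The divisibility criterion $D(\tilde\Sigma_l,\tilde\Sigma_m)=\{0\}$ unless $l\mid m$ then kills all cross-terms because $p_i\mid \alpha_i/\beta$ but $p_i\nmid \alpha_j$ for $j\ne i$.

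Second, you omit the stabilization by $\#^l(S^2\times S^1)$ that \cite[Prop.~3.7]{CMV} inserts on the source side. This is not cosmetic: once the target is a nontrivial connected sum $\#_i N_i$, the lower bound (producing an actual map of each degree $a\in A$ into the \emph{sum}, not just into each $N_i$ separately) requires enough free rank in $H_1$ of the source to assemble the individual pinch maps into one map to $\#_i N_i$. Your sketch handles the upper bound carefully but leaves this half of the equality unaddressed.
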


Combining Theorem \ref{t:CMV} and the fact that  every mapping degree set of $n$-manifolds is the mapping degree set of some $(n+k)$-manifolds when $k\ge 3$ \cite[Theorem 2.2]{NSTWW}, we proved the following result.

\begin{thmx}\label{extension}  \cite[Theorem 2.3]{NSTWW}
For each positive integer $n\ne 1, 2, 4, 5$,
every finite set of integers containing $0$ is the mapping degree set 
of a pair of  $n$-manifolds. 
\end{thmx}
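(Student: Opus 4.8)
The plan is to obtain this statement by stacking the two ingredients recalled just above: Theorem~\ref{t:CMV} supplies a base case in dimension $3$, and the dimension-raising result \cite[Theorem 2.2]{NSTWW} promotes any realization in dimension $n$ to one in dimension $n+k$ for every $k\ge 3$. So the whole argument should be a bookkeeping of which dimensions these two facts reach, with no new construction required.

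Concretely, I would fix a finite set $A\subseteq\Z$ with $0\in A$. First, Theorem~\ref{t:CMV} produces closed oriented $3$-manifolds $M_0,N_0$ with $D(M_0,N_0)=A$, which settles the case $n=3$. Next, for any target dimension $n\ge 6$ I would set $k=n-3$; since $n\ge 6$ we have $k\ge 3$, so \cite[Theorem 2.2]{NSTWW} applies to the pair $(M_0,N_0)$ and yields closed oriented $n$-manifolds $M,N$ with $D(M,N)=D(M_0,N_0)=A$. As $A$ was arbitrary, every finite set containing $0$ is realized in dimension $n$.

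It then remains only to confirm that the dimensions so obtained are exactly the claimed ones. As $k$ ranges over the integers $\ge 3$, the stabilized dimensions $3+k$ run over $\{6,7,8,\dots\}$, and together with the base case $n=3$ this gives $\{3\}\cup\{n\ge 6\}$, i.e.\ precisely the positive integers $n\ne 1,2,4,5$. This is a one-line arithmetic check.

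I expect the only real friction to be the hypothesis $k\ge 3$ in \cite[Theorem 2.2]{NSTWW}, which is what forces the exclusions: starting at dimension $3$, reaching $n=4$ or $n=5$ would need $k=1$ or $k=2$, exactly the cases the dimension-raising result cannot handle. So the obstacle is structural to the method and not to the present deduction; it is precisely why dimensions $4$ and $5$ are left out here and must be recovered separately by the circle-bundle argument behind item~$(\mathcal R)$.
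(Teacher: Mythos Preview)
Your proposal is correct and matches exactly the argument the paper indicates: the paper does not give a separate proof of this statement but simply records that it follows by combining Theorem~\ref{t:CMV} with \cite[Theorem~2.2]{NSTWW}, precisely the stacking you describe. There is nothing to add.
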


Theorem \ref{extension} fails for $n=1,2$.
One motivation for the present work was to show that 
Theorem \ref{extension} holds as well for $n=4$ and $5$, giving therefore a complete solution to Problem \ref{p1} in all dimensions. Indeed, as we shall explain below, our study for  maps between $S^1$-bundles yields this consequence.

\smallskip

At first, we have the following result.

\begin{thm}\label{appl2}
 Suppose $N$ is a closed oriented aspherical $n$-manifold with  SCF fundamental group,  and $b\in H^2(N; \mathbb{Z})$ is not torsion.
\begin{itemize} 
\item[(1)] Assume that   $D(N)$ is finite.   Then for any non-zero integers $k$ and $m$
 $$
D(\tilde N_{mb},\tilde N_{kb})\subset \left\{\begin{array}{ll}
       \{0, \pm k/m\}, & \text{if $k$ is a multiple of $m$,} \\
       \text{} & \\
        \{0\}, & \text{otherwise}.
        \end{array}\right.
$$ 
\item[(2)] Assume that $D(N)=\{0,1\}$ and $f^{\#}(b)=b$ holds
for any degree one self-map $f\colon N\to N$.
 Then for any non-zero integers $k$ and $m$
 $$
D(\tilde N_{mb},\tilde N_{kb})= \left\{\begin{array}{ll}
       \{0,k/m\}, & \text{if $k$ is a multiple of $m$,} \\
       \text{} & \\
        \{0\}, & \text{otherwise}.
        \end{array}\right.
$$ 
  \end{itemize}
  Moreover, the assumptions in (2) hold if $N$ is a closed oriented hyperbolic $n$-manifold with $n\ge 3$ and the order of its isometry group $\mathrm{Isom}(N)$ is odd. 
 \end{thm}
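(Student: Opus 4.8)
The plan is to reduce the whole statement to the two structural theorems above and then analyze a single equation on Euler classes. Write $e_1=mb$ and $e_2=kb$ for the Euler classes of $\tilde N_{mb}$ and $\tilde N_{kb}$. Since $\pi_1(N)$ is SCF, Theorem~\ref{main2} lets me replace every non-zero degree map $\tilde N_{mb}\to\tilde N_{kb}$ by a fiber-preserving representative, so that $D(\tilde N_{mb},\tilde N_{kb})=D_{FP}(\tilde N_{mb},\tilde N_{kb})$. By Theorem~\ref{main1}, the non-zero elements of the latter are precisely the products $l\cdot\deg(g)$ for which there is a map $g\colon N\to N$ with $\deg(g)\neq 0$ and $g^\#(e_2)=l\,e_1$, that is,
\[
k\,g^\#(b)=lm\,b \quad\text{in } H^2(N;\Z).
\]
Both parts (1) and (2) will follow from reading off the constraints this equation imposes on $l\cdot\deg(g)$.

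For part (1) I would use that, since self-maps iterate, a finite $D(N)$ satisfies $D(N)\subseteq\{0,\pm1\}$; hence $\deg(g)=\pm1$. A degree $\pm1$ self-map satisfies $g_!\,g^\#=\deg(g)\cdot\id$ for the Gysin map $g_!$, so $g^\#$ is a split injection on $H^2(N;\Z)$ and therefore induces an automorphism $\Phi\in\GL_r(\Z)$ of the free quotient $H^2(N;\Z)/\mathrm{tors}\cong\Z^r$. Reducing the displayed equation modulo torsion gives $k\,\Phi(\bar b)=lm\,\bar b$ with $\bar b\neq 0$. As an element of $\GL_r(\Z)$ preserves the content (the gcd of the coordinates) of a vector, comparing contents of the two sides forces $|k|=|lm|$; thus a solution exists only if $m\mid k$, in which case $|l|=|k/m|$ and $l\cdot\deg(g)=\pm k/m$. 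This is exactly the asserted inclusion, with the empty case $m\nmid k$ giving $\{0\}$.

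Part (2) sharpens both inputs: $D(N)=\{0,1\}$ forces $\deg(g)=1$, and the hypothesis $g^\#(b)=b$ collapses the equation to $k\,b=lm\,b$, so that $l=k/m$ (again needing $m\mid k$) and the only candidate non-zero degree is $k/m$. To upgrade the inclusion to an equality I would realize this value directly: taking $g=\id_N$ gives $g^\#(e_2)=kb=(k/m)\,e_1$, so $l=k/m$ meets the hypotheses of Theorem~\ref{main1} with $\deg(g)=1$ and places $k/m$ in $D_{FP}$. Hence the set is $\{0,k/m\}$ when $m\mid k$ and $\{0\}$ otherwise.

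Finally I would verify the two hypotheses of (2) for a closed oriented hyperbolic $N^n$, $n\geq3$, with $|\mathrm{Isom}(N)|$ odd. That $D(N)=\{0,1\}$ follows by stringing together standard facts: positivity of the simplicial volume makes $\|\cdot\|$ a domination invariant, forcing $D(N)\subseteq\{0,\pm1\}$; a degree $\pm1$ self-map is $\pi_1$-surjective, hence (as $\pi_1(N)$ is Hopfian) a homotopy equivalence, hence homotopic to an isometry by Mostow rigidity; and a degree $-1$ map would be an orientation-reversing isometry, whose presence would make $\mathrm{Isom}^+(N)$ an index-two subgroup, impossible for a group of odd order. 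The same reduction sends any degree one self-map to an orientation-preserving isometry of odd order, so the induced action on $H^2(N;\Z)/\mathrm{tors}$ factors through a finite group of odd order, and $f^\#(b)=b$ amounts to $b$ lying in the fixed sublattice of this action. I expect this last point to be the main obstacle: odd order alone does not trivialize the isometry action on $H^2$, so the honest task is to guarantee that the given non-torsion $b$ is actually fixed (rather than merely permuted within an orbit). The cleanest resolution is to work with $N$ whose odd-order isometry group acts trivially on $H^2(N;\Z)$---for instance $\mathrm{Isom}(N)$ trivial, where every degree one self-map is homotopic to $\id_N$---which is precisely the situation produced in the realization application.
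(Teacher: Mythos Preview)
Your arguments for parts (1) and (2) are correct and essentially parallel the paper's. Your content argument in (1) is a pleasant variant of the paper's reasoning: the paper observes that the equation $k\,f^\#(b)=lm\,b$ exhibits the image $\bar b$ of $b$ in $H^2(N;\Q)$ as an eigenvector of $f^\#$ with rational eigenvalue $\lambda=lm/k$, and then uses that $f^\#\in\GL(\beta_2(N),\Z)$ to force $\lambda=\pm 1$ via the rational root theorem. Your gcd/content computation is the same conclusion packaged differently.

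Where you fall short is the ``Moreover'' clause. You are right that odd order of $\mathrm{Isom}(N)$ does not, by itself, force the isometry action on $H^2$ to be trivial, so you cannot verify the literal hypothesis ``$f^\#(b)=b$ for every degree-one $f$'' this way. The paper does not attempt to verify that hypothesis either; instead it re-enters the computation of (2) directly and exploits the extra structure already present. Concretely: starting from a non-zero degree $\tilde f\colon \tilde N_{mb}\to\tilde N_{kb}$, the induced $f\colon N\to N$ is homotopic to an isometry of some odd order $r$ (Mostow rigidity plus odd $|\mathrm{Isom}(N)|$), so $\deg(f)=1$ and $(f^\#)^r=\id$ on $H^2(N;\Q)$. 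But the Euler-class equation $k\,f^\#(b)=lm\,b$ coming from Theorem~\ref{main1} already says that $\bar b$ is an eigenvector of $f^\#$ with real eigenvalue $lm/k$. A real $r$-th root of unity with $r$ odd equals $1$, hence $lm/k=1$, i.e.\ $m\mid k$ and $l=k/m$, giving $\deg(\tilde f)=k/m$. This is the missing step: you never need $f^\#(b)=b$ for \emph{arbitrary} degree-one $f$, only for those $f$ that actually arise from a fiber-preserving $\tilde f$, and for those the eigenvector relation is automatic. Your proposed retreat to $\mathrm{Isom}(N)=\{\id\}$ would prove a weaker statement than what is claimed.
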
 
 
 \begin{rem}
 Theorem \ref{appl2} (1) implies that   $D(\tilde N_b)\subset \{0, \pm 1\}.$ When $\pi_1(N)$ is a non-elementary hyperbolic group, the inclusion $D(\tilde N_b)\subset \{0, \pm 1\}$ was  proved   
via a different approach in \cite[Theorem 1.3]{Ne}. In fact, as explained in \cite[Remark 1.5]{Ne}, the hyperbolicity of $\pi_1(N)$ can be replaced by the assumptions that $N$ does not admit self-maps of degree greater than one and $\pi_1(N)$ is Hopfian with trivial center.
\end{rem}

Next, by finding base manifolds $N$ of dimensions $3$ and $4$, and $b\in H^2(N;\mathbb{Z})$ satisfying the assumption of Theorem \ref{appl2} (2),  we obtain the following result.

\begin{thm}\label{0,k}
For each non-zero integer $k$ and  $n=4, 5$,  there exist closed oriented aspherical  $n$-manifolds of the form $\tilde N_b,\tilde N_{kb}$ such that  
$$D(\tilde N_b,\tilde N_{kb})=\{0, k\}.$$
 \end{thm}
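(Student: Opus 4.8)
The plan is to reduce Theorem \ref{0,k} to the second part of Theorem \ref{appl2} by exhibiting, for each target dimension $n\in\{4,5\}$, a suitable base manifold $N$ of dimension $n-1\in\{3,4\}$. Concretely, applying Theorem \ref{appl2}(2) with $m=1$ (so that $k$ is automatically a multiple of $m$) yields $D(\tilde N_b,\tilde N_{kb})=\{0,k\}$ for \emph{every} non-zero integer $k$ at once, provided $N$ is a closed oriented aspherical manifold whose fundamental group is SCF, $b\in H^2(N;\mathbb{Z})$ is non-torsion, $D(N)=\{0,1\}$, and $f^\#(b)=b$ for every degree-one self-map $f\colon N\to N$. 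Thus the entire content of the theorem is the existence of one such pair $(N,b)$ in each of dimensions $3$ and $4$ (independent of $k$); note that $\tilde N_b$ and $\tilde N_{kb}$ are then automatically aspherical, since a circle bundle over an aspherical manifold is aspherical by the homotopy exact sequence of the fibration $S^1\to \tilde N_b\to N$.

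To produce the required bases I would look for closed oriented hyperbolic manifolds $N$ of dimension $\ge 3$ with $b_2(N)>0$ and \emph{trivial} isometry group, and then invoke the ``moreover'' clause of Theorem \ref{appl2} (triviality is the order-$1$, hence odd, case). Here is why triviality of $\mathrm{Isom}(N)$ gives everything we need: since $\pi_1(N)$ is residually finite and hence Hopfian, any degree-one self-map of $N$ is $\pi_1$-surjective, therefore a homotopy equivalence, and by Mostow rigidity homotopic to an isometry; with $\mathrm{Isom}(N)$ trivial this isometry is the identity, so $f^\#=\id$ and in particular $f^\#(b)=b$ for any class $b$. The same reasoning rules out a degree $-1$ self-map (it would be homotopic to an orientation-reversing isometry), while positivity of the simplicial volume forces $|\deg|\le 1$; hence $D(N)=\{0,1\}$. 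Finally $\pi_1(N)$ is non-elementary hyperbolic, so it is SCF by Remark (2) following Theorem \ref{main2}, and any non-torsion $b\in H^2(N;\mathbb{Z})$ (available since $b_2(N)>0$) completes the hypotheses.

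For $n=4$ the base is $3$-dimensional, where such manifolds are abundant: a closed hyperbolic $3$-manifold that fibers over the circle has $b_1\ge 1$, hence $b_2\ge 1$ by Poincar\'e duality, and a generic such manifold has trivial isometry group, so one can simply quote an explicit example from the census (or argue via Kojima's realization of prescribed isometry groups). The dimension-$4$ case for $n=5$ is the \emph{main obstacle}, since closed hyperbolic $4$-manifolds are comparatively rigid to construct and one must simultaneously control both $b_2$ and the isometry group. My plan is to start from an arithmetic closed hyperbolic $4$-manifold $M_0$ of simplest type with $b_2(M_0)>0$, whose nonvanishing second Betti number is supplied by Millson-type nonvanishing theorems (totally geodesic surfaces yielding nonzero classes in $H^2$), and then pass to a finite cover $N\to M_0$ with trivial full isometry group, as furnished by the Belolipetsky--Lubotzky realization of prescribed symmetry groups for arithmetic hyperbolic manifolds. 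Since rational cohomology injects into that of a finite cover via the transfer, $b_2(N)\ge b_2(M_0)>0$, so a non-torsion class $b$ persists, and the triviality of $\mathrm{Isom}(N)$ delivers the remaining hypotheses exactly as above.

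In summary, the crux is locating a single closed hyperbolic $4$-manifold with positive second Betti number and trivial isometry group; once this is in hand the theorem follows formally from Theorem \ref{appl2}(2). I expect the verification that $f^\#(b)=b$—which reduces, through Hopficity and Mostow rigidity, to triviality of the isometry action on $H^2$—to be the conceptual heart, with the arithmetic input (nonvanishing $b_2$ together with controllable symmetry in dimension $4$) being the technically demanding ingredient.
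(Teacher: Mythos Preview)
Your reduction to Theorem \ref{appl2}(2) is exactly the paper's strategy, and your argument that a closed hyperbolic base with \emph{trivial} isometry group and $\beta_2>0$ satisfies all the hypotheses is correct. The constructions of the base manifolds, however, differ from the paper's in both dimensions. For $n=4$ the paper does \emph{not} take a hyperbolic $3$-manifold: it builds a Haken (but non-hyperbolic) $3$-manifold $M=E(K_1)\cup_\phi E(K_2)$ by gluing two distinct hyperbolic knot complements along their boundary tori so that Seifert surfaces match, and then verifies $D(M)=\{0,1\}$ and $f^\#=\id$ on $H^2$ directly via JSJ theory, Waldhausen's theorem, and Mostow rigidity on the piece $E(K_1)$ (chosen to have trivial isometry group). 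Your hyperbolic approach also works, but the paper's is fully explicit and self-contained. For $n=5$ the paper avoids the compatibility issue you face (arranging that the Belolipetsky--Lubotzky cover lies over a \emph{given} Millson-type $M_0$) by a counting trick: it takes infinitely many BL $4$-manifolds $M_i$ with $\mathrm{Isom}(M_i)\cong\mathbb{Z}_{2i+1}$ (odd, not trivial), invokes Wang's finiteness theorem to get unbounded volumes, and then Gauss--Bonnet in dimension $4$ forces $\chi(M_i)$, hence $\beta_2(M_i)$, to be unbounded---so some $M_i$ has $\beta_2>0$ for free. Your route is also viable since the BL construction produces finite-index subgroups of a fixed arithmetic lattice of simplest type, but you should make explicit that BL can be run starting from your chosen $M_0$ (or, equivalently, that the BL lattices themselves have totally geodesic surfaces giving nontrivial $H_2$); the paper's volume/Euler-characteristic argument sidesteps this entirely.
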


 Finally, we apply the combinatorial construction of \cite[Prop 2.2]{CMV} to Theorem \ref{0,k}, together with Theorem \ref{extension}, to obtain the following result.

\begin{thmx}[Realization Theorem]\label{C}
For every finite set $A$ of integers containing 0 and  each integer $n>2$, there exist closed, oriented $n$-manifolds 
$M$ and $N$ such that $D(M,N)=A$.
\end{thmx}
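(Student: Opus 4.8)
The plan is to split off the dimensions that are already known and to concentrate on the two genuinely new ones. For every integer $n>2$ with $n\neq 4,5$ the assertion is precisely Theorem \ref{extension} (which for $n=3$ is a restatement of Theorem \ref{t:CMV}), so it remains to realize an arbitrary finite set $A\subset\Z$ with $0\in A$ by closed oriented $4$- and $5$-manifolds. For these two dimensions I would reproduce, block for block, the dimension-$3$ argument of \cite{CMV}, substituting the circle bundles over surfaces used there by the higher-dimensional bundles supplied by Theorem \ref{0,k}.

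Fix once and for all a closed oriented hyperbolic base $N$ of dimension $3$ (for $n=4$) or $4$ (for $n=5$) whose isometry group has odd order, together with a non-torsion class $b\in H^2(N;\Z)$, as produced in the proof of Theorem \ref{0,k}; then $N$ and $b$ satisfy the hypotheses of Theorem \ref{appl2}(2). The elementary pieces of the construction are the circle bundles $\tilde N_{mb}$, $m\neq 0$, over this single base. By Theorem \ref{appl2}(2) their pairwise mapping degree sets are $D(\tilde N_{mb},\tilde N_{kb})=\{0,k/m\}$ when $m\mid k$ and $\{0\}$ otherwise, which is exactly the divisibility pattern displayed by the surface bundles $\tilde\Sigma_m,\tilde\Sigma_k$ in \cite{CMV} (and Theorem \ref{0,k} is the special case $m=1$, giving $\{0,k\}$). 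Since the arithmetic combinatorics of \cite[Prop. 2.2]{CMV} depends only on this combinatorial pattern, it applies verbatim and outputs, for any prescribed $A$, explicit Euler multiples and summand counts producing connected sums $P=\#_i\tilde N_{m_i b}$ and $Q=\#_j\tilde N_{l_j b}$ for which $D(P,Q)=A$ (mixed signs are accommodated by orientation reversal exactly as in \cite{CMV}).

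The geometric ingredient that licenses this transfer is a connected-sum degree formula valid in dimensions $4$ and $5$: for $P$ and $Q$ formed from the pieces above, $D(P,Q)$ is computed from the pairwise sets $D(\tilde N_{m_i b},\tilde N_{l_j b})$ in the same way as in the surface case. The lower bound is the easy half: pinching the source connected sum to a wedge and mapping the summands across by the degree-$k/m$ maps of Theorem \ref{appl2}(2)/Theorem \ref{0,k} realizes every degree prescribed by the arithmetic. For the upper bound I would argue on fundamental groups. Each piece is aspherical and its group sits in a central extension $1\to\Z\to\pi_1(\tilde N_{mb})\to\pi_1(N)\to1$ in which the fiber class $\Z$ is infinite and central; a torsion-free group with nontrivial center admits no nontrivial free-product splitting and is not infinite cyclic, so every $\pi_1(\tilde N_{mb})$ is freely indecomposable. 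Hence Grushko's theorem gives $\pi_1(Q)=\ast_j\pi_1(\tilde N_{l_j b})$ as its essentially unique free-product decomposition, a non-zero degree map $P\to Q$ must respect this decomposition, and the asphericity of the target summands together with Theorems \ref{main2} and \ref{main1} reduces the map on each factor to the already computed pairwise degree sets.

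The main obstacle is this higher-dimensional connected-sum degree formula, and specifically its \emph{exact} upper bound $D(P,Q)\subseteq A$. In dimension $3$ one controls how maps interact with connected sums using the Kneser--Milnor prime decomposition and the mature theory of non-zero degree maps between Seifert and Haken pieces; none of this is available in dimensions $4$ and $5$. Its replacement is the purely group-theoretic package above---free indecomposability of the circle-bundle groups, uniqueness of the Grushko decomposition, and asphericity of the pieces---fed by the structural Theorems \ref{main2} and \ref{main1}, the delicate point being to rule out unexpected degrees coming from maps that mix the free-product factors. Once the formula is in place, feeding the blocks of Theorem \ref{0,k} into \cite[Prop. 2.2]{CMV} yields the required $4$- and $5$-manifolds, and together with Theorem \ref{extension} this settles the statement for all $n>2$.
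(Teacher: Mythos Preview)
Your overall plan is the paper's plan: split off $n\neq4,5$ by Theorems \ref{t:CMV} and \ref{extension}, and for $n=4,5$ rerun the combinatorics of \cite[Prop.~2.2]{CMV} with the circle bundles $\tilde N_{mb}$ from Theorem \ref{0,k} in place of the surface bundles. Two points, however, deserve correction.

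First, your choice of base in dimension $3$ is not what Theorem \ref{0,k} actually supplies. You write that you fix ``a closed oriented hyperbolic base $N$ of dimension $3$ \dots\ whose isometry group has odd order \dots\ as produced in the proof of Theorem \ref{0,k}''. For $n=4$ the proof of Theorem \ref{0,k} does \emph{not} produce a hyperbolic $3$-manifold; it builds a Haken $3$-manifold $E(K_1)\cup_\phi E(K_2)$ glued from two hyperbolic knot complements along a JSJ torus, and verifies $D(N)=\{0,1\}$ and $f^{\#}(b)=b$ directly from the JSJ/Waldhausen rigidity, not via the ``Moreover'' clause of Theorem \ref{appl2}. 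The existence of a closed hyperbolic $3$-manifold with odd-order isometry group \emph{and} $\beta_2>0$ is exactly what the paper flags as an expectation in the remark following Theorem \ref{C}, not something it establishes. So either use the actual base from Theorem \ref{0,k}, or supply an independent argument for such a hyperbolic $3$-manifold.

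Second, you propose to prove the connected-sum degree formula from scratch via Grushko/Kurosh and free indecomposability of the $\pi_1(\tilde N_{mb})$. That is the right mechanism, but you do not need to redo it: the paper simply invokes \cite[Lemma 3.5]{NWW} (which computes $D(\#_\beta \tilde N_{c_\beta b},\,\tilde N_{db})$ against a single aspherical target) and \cite[Prop.~3.7]{CMV} (which assembles the blocks, absorbing first Betti numbers with extra $S^{n-1}\times S^1$ summands, to get $D\big((\#_i M_i)\#(\#^l S^{n-1}\times S^1),\,\#_i N_i\big)=\bigcap_i D(M_i,N_i)$). These results are stated for aspherical target summands in arbitrary dimension and apply verbatim once you feed in the pairwise sets from Theorem \ref{appl2}(2); your sketch of the upper bound is the content of those lemmas, and the ``delicate point'' you flag about factor-mixing is precisely what they handle. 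With those two adjustments your argument coincides with the paper's.
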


\begin{rem} 
The existence of a non-torsion element $b\in H^2(M;\mathbb{Z})$ is equivalent to that the second Betti number 
$\beta_2(M)$ is positive. We expect that for each $n\ge 3$, there exists  a closed oriented hyperbolic $n$-manifold $M$ with $\beta_2(M)>0$ and  $\mathrm{Isom}(M)$ of odd order.  If this is true, then Theorem \ref{0,k} holds in all dimensions $\ge 3$, and thus the realization of any finite set $A$ in Theorem \ref{C} can be done  in all dimensions $\ge 3$ by only using connected sums of $S^1$-bundles over hyperbolic manifolds.
\end{rem}

\section{Maps between $S^1$-bundles are often fiber-preserving up to homotopy}

In this section we will prove Theorem \ref{main2}.

We begin by recalling some facts for $S^1$-bundles that we need; see \cite{MS}, \cite{Ha2}, \cite{Mo}.

Let  $M_i$ be a closed smooth oriented $n$-manifold, and let 
$S^1\overset{j_i}\to E_i\ \overset{p_i}\to M_i$ be an oriented $S^1$-bundle, $i=1,2$. 
We say that a map  $\tilde  f\colon E_1\to E_2$ is {\em fiber-preserving} if it maps each $S^1$-fiber of $E_1$ to an $S^1$-fiber of $E_2$, and in this case, it induces a map $ f\colon M_1\to M_2$. 
A   fiber-preserving map $ f\colon E_1\to E_2$ is a {\em bundle map}, if the restriction of $f$ on each $S^1$ fiber is a homeomorphism.
Suppose now $M_1=M_2=M$.  We say that a fiber-preserving map $\tilde f\colon E_1\to E_2$ is {\em vertical} if it  induces the identity map on  $M$.
Finally, two $S^1$-bundles $E_1 \overset{p_1}\longrightarrow M$ and $E_2 \overset{p_2}\longrightarrow M$ are isomorphic, if there is a bundle map $\tilde f\colon E_1\to E_2$ such that $p_1=p_2 \circ \tilde f$.

For each map $f\colon M\to N$ and an $S^1$-bundle $\tilde N$ over $N$, we have the pull-back bundle $ {f^*(\tilde N})$ over $M$ and a bundle map $\tilde f\colon f^*(\tilde N) \to  \tilde N$ such that the following diagram is commutative.
\begin{align}\label{d1}
\begin{CD}
			 f^*(\tilde N) @>  \tilde f>> \tilde N  \\
			@VV p_1 V   @VV p_2 V  \\
		       M @> f >> N
\end{CD} 
\end{align}
Some required basic facts about $S^1$-bundles are stated in the following:

\begin{thm}\label{basic} \
\begin{enumerate} 
\item[(i)] Any oriented $S^1$-bundle $E$ over $M$ is isomorphic to a pull-back bundle from the universal $S^1$-bundle 
$S^{\infty}\to \mathbb{C}P^{\infty}$, i.e., there is a map $f\colon M\to   \mathbb{C}P^{\infty}$ such that the following diagram commutes.
\begin{align}\label{2.2}
\begin{CD}
			 E \cong f^*(S^{\infty})@> \tilde f >> S^{\infty}  \\
			@VV p_1 V   @VV p V  \\
		       M @> f >>  \mathbb{C}P^{\infty}
\end{CD}   
\end{align}
Moreover, $f$ is unique up to homotopy. 
Therefore, there is a one-to-one  correspondence between isomorphism classes of oriented $S^1$-bundles  over $M$ 	and elements of $[M, \mathbb{C}P^\infty]$,
the set of homotopy classes of maps  $M\to \mathbb{C}P^{\infty}$.

\item[(ii)] Since $\mathbb{C}P^{\infty}$ is a model for the Eilenberg-MacLane space $K(\mathbb{Z}, 2)$, we have 
$$[M, \mathbb{C}P^\infty]\cong [M,  K(\mathbb{Z}, 2)]\cong H^2(M;\mathbb{Z}).$$
 Therefore, there 
 is a one-to-one  correspondence between isomorphism classes of oriented circle bundles $E$ over $M$ 
and elements in $H^2(M;\mathbb{Z})$,  which sends $E$ to its Euler class $e(E)\in H^2(M;\mathbb{Z})$.

\item[(iii)] Since the embedding of $\text{SO}(2)\subset \text{Diff}_0(S^1)$, respectively $\text{SO}(2)\subset \text{Homeo}_0(S^1)$,   is a homotopy equivalence, there is a one-to-one   correspondence between the isomorphism classes of oriented $S^1$-bundles over $M$
and the  isomorphism classes of principal $S^1$-bundles over $M$ in the smooth category, respectively in the topological category.
(As usual, by $\text{Diff}_0(M)$, respectively $\text{Homeo}_0(M)$, we denote the component of diffeomorphism group, respectively homeomorphism group, of $M$ containing the identity.)  
\end{enumerate}
\end{thm}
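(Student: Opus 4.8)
The plan is to derive all three items from the standard classification theory of principal bundles, carried out in the logical order (iii), then (i), then (ii); only the final identification with the Euler class requires bookkeeping particular to this setting.

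I would begin with (iii), since it is what makes the later steps uniform. An oriented $S^1$-bundle is a fiber bundle with fiber $S^1$ whose structure group has been reduced to the orientation-preserving group $\mathrm{Homeo}_0(S^1)$ (respectively $\mathrm{Diff}_0(S^1)$ in the smooth category), whereas a principal $S^1$-bundle is one with structure group $\mathrm{SO}(2)=S^1$ acting on itself by rotations. Taking as input the homotopy equivalences $\mathrm{SO}(2)\hookrightarrow\mathrm{Homeo}_0(S^1)$ and $\mathrm{SO}(2)\hookrightarrow\mathrm{Diff}_0(S^1)$ asserted in the statement, I would invoke the general principle that a homotopy equivalence $H\hookrightarrow G$ of well-pointed topological groups induces a homotopy equivalence $BH\to BG$ of classifying spaces; this is the functoriality and homotopy invariance of Milnor's join (equivalently, the bar) construction. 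Since isomorphism classes of bundles with structure group $G$ over a CW base $M$ are in bijection with $[M,BG]$, the induced bijection $[M,B\,\mathrm{SO}(2)]\cong[M,B\,\mathrm{Homeo}_0(S^1)]$ is exactly the reduction-of-structure-group correspondence, giving (iii) in both categories.

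For (i), it now suffices to classify principal $S^1$-bundles. I would recall that $S^\infty\subset\mathbb{C}^\infty$ is contractible (via an explicit coordinate-shift homotopy) and that the diagonal $S^1$-action on it is free with orbit space $\mathbb{C}P^{\infty}$, so $S^\infty\to\mathbb{C}P^{\infty}$ is a universal principal $S^1$-bundle. The bundle-homotopy theorem (homotopic maps pull back isomorphic bundles), together with the existence and uniqueness up to homotopy of a classifying map for any bundle over a CW/paracompact base, then yields the natural bijection between isomorphism classes of principal $S^1$-bundles over $M$ and $[M,\mathbb{C}P^{\infty}]$, with the pullback $f\mapsto f^*(S^\infty)$ as inverse. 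Composing with (iii) produces (i), and by construction $B\,\mathrm{Homeo}_0(S^1)\simeq B\,\mathrm{SO}(2)=\mathbb{C}P^{\infty}$, so the same classifying space governs oriented $S^1$-bundles.

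Finally, for (ii) I would compute the homotopy type of $\mathbb{C}P^{\infty}$ from the long exact sequence of the fibration $S^1\to S^\infty\to\mathbb{C}P^{\infty}$: contractibility of $S^\infty$ forces $\pi_k(\mathbb{C}P^{\infty})\cong\pi_{k-1}(S^1)$, which is $\Z$ for $k=2$ and trivial otherwise, so $\mathbb{C}P^{\infty}$ is a $K(\Z,2)$. Representability of ordinary cohomology by Eilenberg--MacLane spaces gives the natural isomorphism $[M,K(\Z,2)]\cong H^2(M;\Z)$, $g\mapsto g^*(\iota)$, where $\iota\in H^2(K(\Z,2);\Z)$ is the fundamental class. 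Tracing a bundle $E$ through (i) and this isomorphism sends it to the pullback of the positive generator of $H^2(\mathbb{C}P^{\infty};\Z)$ along its classifying map, and I would identify this with the Euler class $e(E)$ (either by taking the pullback of the universal class as the definition, or by checking that it equals $c_1$ of the associated complex line bundle). The one genuinely nontrivial ingredient is the passage in (iii) from a homotopy equivalence of structure groups to a bijection of bundle sets, which rests on the homotopy invariance of the bar construction for well-pointed groups of CW homotopy type; everything else is the standard Milnor--Stasheff/Morita package and can be cited rather than reproved.
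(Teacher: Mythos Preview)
Your outline is correct and follows the standard textbook route, but you should be aware that the paper does not actually prove this theorem: it is stated as a collection of known background facts, prefaced by ``Some required basic facts about $S^1$-bundles are stated in the following'', with references to Milnor--Stasheff, Hatcher, and Morita, and no proof is given. So there is no ``paper's own proof'' to compare against; the authors simply cite the result.

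That said, your reconstruction is exactly the argument one finds in those references, and your chosen order (iii) $\Rightarrow$ (i) $\Rightarrow$ (ii) is sensible: reducing to principal bundles first lets the universal-bundle machinery apply cleanly. The only place to be slightly careful is your remark that the reduction-of-structure-group step is ``the one genuinely nontrivial ingredient''; in the cited sources this is usually handled either by the associated-bundle construction (passing between a principal $S^1$-bundle and its associated oriented $S^1$-bundle is a manifest bijection on isomorphism classes, no $BG$ argument needed) or, as you do, via $BH\simeq BG$. Either works, and both are standard. Your identification of the class in $H^2$ with the Euler class via $c_1$ of the associated line bundle is also the usual one.
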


In the rest of the paper, we omit the term ``isomorphism classes".

Now  we rewrite Theorem \ref{main2} in the following slightly more informative way:

\begin{thm}[Theorem \ref{main2}]\label{main3.1'}
Let  $M_i$ be a closed oriented aspherical $n$-manifold such that $\pi_1(M_2)$ is SCF, and let 
$S^1\overset{j_i}\to E_i\ \overset{p_i}\to M_i$ be an oriented $S^1$-bundle, $i=1,2$. 
Then any map $\tilde f\colon   E_1\to  E_2$  of non-zero degree is homotopic to a fiber-preserving one.
\end{thm}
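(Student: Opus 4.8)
The plan is to exploit that $E_1$ and $E_2$ are themselves aspherical, so that the homotopy class of $\tilde f$ is completely encoded by the induced homomorphism $\tilde f_*\colon \pi_1(E_1)\to\pi_1(E_2)$ up to conjugacy. Indeed, the fibrations $S^1\to E_i\to M_i$ together with $\pi_2(M_i)=0$ yield central extensions
\[
1\to \Z_i\to \pi_1(E_i)\xrightarrow{\,q_i\,}\pi_1(M_i)\to 1,\qquad i=1,2,
\]
where $\Z_i=j_{i*}\pi_1(S^1)$ is the (central) fiber subgroup, while the long exact homotopy sequence also gives $\pi_m(E_i)=0$ for $m\ge 2$. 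Thus the whole problem splits into: (a) showing $\tilde f_*$ carries the fiber subgroup $\Z_1$ into $\Z_2$, and (b) realizing the resulting algebraic data by a genuine fiber-preserving map.

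For (a), the conceptual heart, I would first observe that a non-zero degree map between closed oriented manifolds of equal dimension has $\pi_1$-image of finite index: if $H:=\tilde f_*\pi_1(E_1)$ had infinite index, then $\tilde f$ would lift to the corresponding non-compact cover of $E_2$, whose top homology vanishes, forcing $\deg\tilde f=0$. Now $\tilde f_*(\Z_1)$ is central in $H$, being the image of a central subgroup, so $q_2\big(\tilde f_*(\Z_1)\big)$ is central in the finite-index subgroup $\bar H:=q_2(H)\le \pi_1(M_2)$. Since $\pi_1(M_2)$ is SCF, $\bar H$ has trivial center, whence $q_2\big(\tilde f_*(\Z_1)\big)=1$, i.e. $\tilde f_*(\Z_1)\subseteq \ker q_2=\Z_2$. (The same argument applied with $H=\pi_1(E_2)$ shows $Z(\pi_1(E_2))=\Z_2$.) Consequently $\tilde f_*$ descends to a homomorphism $f_*\colon \pi_1(M_1)\to\pi_1(M_2)$ and restricts to multiplication by some integer $k$ on the fibers; comparing the extension classes then forces $f^\#(e_2)=k\,e_1$ in $H^2(M_1;\Z)$. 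Since $M_1$ is aspherical, $f_*$ is realized by a map $f\colon M_1\to M_2$, unique up to homotopy.

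For (b) I would use the bundle-theoretic description of Theorem \ref{basic}. Form the pull-back $f^*E_2\to M_1$, whose Euler class is $f^\#(e_2)=k\,e_1$, together with the canonical fiber-preserving bundle map $\hat f\colon f^*E_2\to E_2$ covering $f$; by the universal property of the pull-back, any fiber-preserving map over $f$ factors as $\hat f$ composed with a vertical map $E_1\to f^*E_2$. A vertical fiber-degree-$k$ map $h_0\colon E_1\to f^*E_2$ exists precisely because the two bundles over $M_1$ have Euler classes $e_1$ and $k\,e_1$: concretely it is the fibrewise $k$-th power map, obtained by pulling back the map of universal bundles induced by $z\mapsto z^k$ on $S^1$. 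The composite $\hat f\circ h_0$ is then fiber-preserving, covers $f_*$ and agrees with $\tilde f_*$ on $\Z_1$, so it can differ from $\tilde f_*$ only by a ``derivation'' in $\mathrm{Hom}(\pi_1(M_1),\Z_2)=H^1(M_1;\Z)$. This indeterminacy is exactly the gauge group of $f^*E_2$, which is $H^1(M_1;\Z)$: post-composing $h_0$ with a suitable vertical (gauge) automorphism of $f^*E_2$ shifts this derivation by an arbitrary element of $H^1(M_1;\Z)$, so I can arrange $(\hat f\circ h)_*=\tilde f_*$ on the nose. Asphericity of $E_2$ then gives $\hat f\circ h\simeq \tilde f$, completing the proof.

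The step I expect to be the main obstacle is (b), and within it the bookkeeping of the gauge indeterminacy. One must verify that the relevant off-diagonal part of $\tilde f_*$ lies in $H^1(M_1;\Z)$ and, crucially, that the \emph{full} group $H^1(M_1;\Z)$ is realized by gauge transformations of the pull-back bundle $f^*E_2$ over $M_1$ — rather than merely the smaller groups $k\cdot H^1(M_1;\Z)$ or $f^\#H^1(M_2;\Z)$ that one would get by twisting $E_1$ or $E_2$ directly. This is precisely why the correction is applied to the vertical factor $E_1\to f^*E_2$; establishing this identification of vertical automorphisms with $H^1(M_1;\Z)$, and that twisting shifts the derivation freely, is the technical core of the argument.
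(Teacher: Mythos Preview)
Your proposal is correct and follows essentially the same strategy as the paper: part (a) matches the paper's Step~0 verbatim, and part (b) parallels the paper's Steps~I--III (pull back along $f$, produce a vertical fiber-degree-$k$ map, then correct by a gauge automorphism coming from $H^1(M_1;\Z)$, and conclude by asphericity). The only organisational difference is that the paper factors your vertical map $h_0\colon E_1\to f^*E_2$ as a $k$-fold fiberwise cover $E_1''\to f^*E_2$ followed by a bundle isomorphism $E_1\cong E_1''$ (their Lemma~\ref{1} and Proposition~\ref{selfmap}), applying the gauge correction on the $E_1$ side rather than on $f^*E_2$; the underlying idea and the identification of the indeterminacy with $H^1(M_1;\Z)$ are the same.
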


Before proving Theorem \ref{main3.1'}, let us first show some results that we will need.

\begin{lem}\label{algebra}
Let $1\to Z\to G\xrightarrow{p} \Gamma \to 1$ and $1\to Z'\to G'\xrightarrow{p'} \Gamma' \to 1$ be two short exact sequences of groups given by central extensions. Let $i\colon Z\to Z'$ be an isomorphism and $\psi\colon\Gamma\to \Gamma'$, $\phi_j\colon G\to G'$ homomorphisms such that the following diagram commutes for $j=1,2$. 
\begin{align}\label{3.0}
\begin{CD}
		1@> >> Z @> >>	G @>  p>> \Gamma @ > >> 1 \\
		@ VVV	@VVi  V  @VV \phi_j V   @VV \psi V  @ VV V \\
		1@> >>  Z' @>  >>      G' @> p' >> \Gamma' @ > >> 1,
\end{CD}
\end{align}
Then there is a self-homomorphism $\kappa\colon G\to G$ such that $\phi_1\circ \kappa=\phi_2$. Moreover, there exists a homomorphism $\lambda\colon\Gamma\to Z$, such that $\kappa(g)=\lambda(p(g))\cdot g$ for any $g\in G$.
\end{lem}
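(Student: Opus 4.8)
The plan is to measure the ``difference'' between $\phi_1$ and $\phi_2$ by a single central-valued homomorphism and then correct $G$ by it. First I would define $\delta\colon G\to G'$ by $\delta(g)=\phi_1(g)^{-1}\phi_2(g)$. Because the right-hand square of \eqref{3.0} commutes for both $j=1,2$, we have $p'\circ\phi_1=\psi\circ p=p'\circ\phi_2$, so $p'(\delta(g))=1$ and hence $\delta(g)\in\ker p'=Z'$ for every $g$. Thus $\delta$ is really a map $G\to Z'$.

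Next I would verify that $\delta$ is a homomorphism. This is where centrality of $Z'$ in $G'$ enters: expanding $\delta(gh)=\phi_1(h)^{-1}\phi_1(g)^{-1}\phi_2(g)\phi_2(h)$ and moving the central element $\delta(g)=\phi_1(g)^{-1}\phi_2(g)$ past $\phi_1(h)^{-1}$ gives $\delta(gh)=\delta(g)\delta(h)$. Moreover, on the subgroup $Z$ both $\phi_1$ and $\phi_2$ restrict to $i$ by the left square, so $\delta|_Z$ is trivial; therefore $\delta$ factors through $\Gamma=G/Z$, yielding a homomorphism $\bar\delta\colon\Gamma\to Z'$ with $\delta=\bar\delta\circ p$.

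Now I would set $\lambda:=i^{-1}\circ\bar\delta\colon\Gamma\to Z$ and define $\kappa\colon G\to G$ by $\kappa(g)=\lambda(p(g))\cdot g$, which is already of the required form. Since $\lambda(p(g))\in Z$ is central in $G$, the same commutation argument as before shows $\kappa$ is a homomorphism. Finally, to check $\phi_1\circ\kappa=\phi_2$, I would apply $\phi_1$: because $\lambda(p(g))\in Z$ and $\phi_1|_Z=i$, we get $\phi_1(\lambda(p(g)))=i(\lambda(p(g)))=\bar\delta(p(g))=\delta(g)$, so $\phi_1(\kappa(g))=\delta(g)\,\phi_1(g)$; using that $\delta(g)=\phi_1(g)^{-1}\phi_2(g)$ is central in $G'$, this collapses to $\phi_1(g)\,\phi_1(g)^{-1}\phi_2(g)=\phi_2(g)$. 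This establishes both assertions at once.

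The argument is essentially formal diagram-chasing, so I do not expect a serious obstacle; the only points that genuinely require the hypotheses are the two uses of centrality (of $Z'$ to make $\delta$ multiplicative, and of $Z$ to make $\kappa$ multiplicative) together with the two uses of the commuting squares (to land $\delta$ in $Z'$ and to kill it on $Z$). Without the central-extension hypothesis $\delta$ would fail to be a homomorphism and the whole scheme would break down, so that is the one place where the structure is indispensable.
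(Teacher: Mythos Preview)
Your proposal is correct and follows essentially the same approach as the paper: define the $Z'$-valued difference $\delta(g)=\phi_1(g)^{-1}\phi_2(g)$ (the paper writes it as $\phi_2(g)\phi_1(g)^{-1}$, which is equal since the element is central), check it is a homomorphism using centrality, factor it through $\Gamma$, and use $i^{-1}$ to produce $\lambda$ and hence $\kappa$. The only cosmetic difference is the order of exposition---you construct $\lambda$ first and then $\kappa$, whereas the paper defines $\kappa$ directly and extracts $\lambda$ afterward.
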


\begin{proof}
Here we naturally consider $Z$ as a subgroup of $G$, and $Z'$ as a subgroup of $G'$. So we have $\phi_j|_Z=i$ for $j=1,2$. We define $\kappa:G\to G$ by 
$$\kappa(g)=i^{-1}(\phi_2(g)\cdot \phi_1(g)^{-1})\cdot g.$$
Note that $\phi_2(g)\cdot \phi_1(g)^{-1}\in Z'$ holds since 
$$p'(\phi_2(g)\cdot \phi_1(g)^{-1})=p'(\phi_2(g))\cdot p'(\phi_1(g))^{-1}=\psi(p(g))\cdot \psi(p(g))^{-1}=e.$$

We check that $\kappa$ is indeed a group homomorphism:
\begin{align*}
& \kappa(g_1)\cdot \kappa(g_2)=i^{-1}(\phi_2(g_1)\cdot \phi_1(g_1)^{-1})\cdot g_1\cdot i^{-1}(\phi_2(g_2)\cdot \phi_1(g_2)^{-1})\cdot g_2\\
=\ & i^{-1}\big(\phi_2(g_1)\cdot \phi_1(g_1)^{-1}\cdot \phi_2(g_2)\cdot \phi_1(g_2)^{-1}\big)\cdot g_1\cdot g_2\\
=\ & i^{-1}\big(\phi_2(g_1)\cdot \phi_2(g_2)\cdot \phi_1(g_2)^{-1}\cdot \phi_1(g_1)^{-1}\big)\cdot (g_1\cdot g_2)\\
=\ & i^{-1}\big(\phi_2(g_1\cdot g_2)\cdot \phi_1(g_1\cdot g_2)^{-1}\big)\cdot (g_1\cdot g_2)=\kappa(g_1\cdot g_2).
\end{align*}
Here the second and third equalities hold since $Z$ and $Z'$ are in the center of $G$ and $G'$ respectively.

We check that $\phi_1\circ \kappa=\phi_2$:
\begin{align*}
& \phi_1\circ \kappa(g)=\phi_1\big(i^{-1}(\phi_2(g)\cdot \phi_1(g)^{-1})\cdot g\big)=\phi_1\big(i^{-1}(\phi_2(g)\cdot \phi_1(g)^{-1})\big)\cdot \phi_1(g)\\
=\ & (\phi_2(g)\cdot \phi_1(g)^{-1})\cdot \phi_1(g)=\phi_2(g).
\end{align*}

For the moreover part, we first define $\tilde{\lambda}:G\to Z$ by $$\tilde{\lambda}(g)=i^{-1}(\phi_2(g)\cdot \phi_1(g)^{-1}).$$ We can check that $\tilde{\lambda}$ is a homomorphism, by the computation that proves $\kappa$ is a homomorphism. For any $z\in Z$, we have $\tilde{\lambda}(z)=i^{-1}(\phi_2(z)\cdot \phi_1(z)^{-1})=i^{-1}(i(z)\cdot i(z^{-1}))=e$. So $Z<\text{ker}\tilde{\lambda}$, and $\tilde{\lambda}$ induces a homomorphism $\lambda:\Gamma\to Z$ such that $\tilde{\lambda}=\lambda\circ p$. By definitions of $\kappa$ and $\tilde{\lambda}$, we have $\kappa(g)=\tilde{\lambda}(g)\cdot g=\lambda(p(g))\cdot g$.
\end{proof}

In fact, $\kappa\colon G\to G$ is a group isomorphism, but we will not need this fact in our proof of Theorem \ref{main3.1'}. 

Let us put the above lemma in the context of circle bundles:  Suppose $S^1\overset{j}\to E\ \overset{p}\to M$ is an oriented $S^1$-bundle over a closed oriented manifold $M$ so that $\pi_1(S^1)$ lies in the center of $\pi_1(E)$ (e.g. when $M$ is aspherical). Given a homomorphism $\phi\colon\pi_1(E)\to \pi_1(E)$ which sends the fiber subgroup $\pi_1(S^1)<\pi_1(E)$ to itself, the restriction of $\phi$ gives a homomorphism $\phi|\colon\pi_1(S^1)\to \pi_1(S^1)$, and $\phi$ induces a homomorphism $\bar{\phi}\colon\pi_1(M)\to \pi_1(M)$.

\begin{prop}\label{selfmap}
Suppose $\phi\colon \pi_1(E)\to \pi_1(E)$ is an isomorphism that sends the fiber subgroup to itself, such that $\phi|=id_{\pi_1(S^1)}$ and $\bar{\phi}=id_{\pi_1(M)}$. 
Then there exists a bundle isomorphism $f_\phi\colon E\to E$ that induces $\phi$ on $\pi_1$.
\end{prop}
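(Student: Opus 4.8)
The plan is to realize $\phi$ by an explicit \emph{gauge transformation} (vertical bundle automorphism) of $E$, whose shape is dictated by the algebraic normal form of Lemma~\ref{algebra}. By Theorem~\ref{basic}(iii) I may assume $E\to M$ is a principal $S^1$-bundle, so that $S^1$ acts freely on $E$ with orbit map $p$; I write the action as $x\cdot z$ for $x\in E$, $z\in S^1$. Since $M$ is aspherical, $\pi_2(M)=0$, so the homotopy exact sequence of the bundle makes $j_*\colon\pi_1(S^1)=\Z\to\pi_1(E)$ injective; thus the fiber group is a central $\Z<\pi_1(E)$ and $1\to\Z\to\pi_1(E)\xrightarrow{p}\pi_1(M)\to 1$ is a central extension.

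First I would feed this single extension into Lemma~\ref{algebra}, taking $\phi_1=\id_{\pi_1(E)}$, $\phi_2=\phi$, and $i=\id$, $\psi=\id$; this is legitimate precisely because the hypotheses $\phi|=\id_{\pi_1(S^1)}$ and $\bar\phi=\id_{\pi_1(M)}$ are exactly the commutativity of the two required squares. The conclusion is a homomorphism $\lambda\colon\pi_1(M)\to\Z$ with
\[
\phi(g)=j_*\big(\lambda(p(g))\big)\cdot g\qquad(g\in\pi_1(E)).
\]
I then reinterpret $\lambda$ geometrically: under $[M,S^1]\cong H^1(M;\Z)\cong\mathrm{Hom}(\pi_1(M),\Z)$ I choose a \emph{based} map $u\colon(M,m_0)\to(S^1,1)$ with $u_*=\lambda$ on $\pi_1$ (based and free homotopy classes coincide here, as $S^1$ is an abelian topological group).

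Next I define the candidate $f_\phi\colon E\to E$ by $f_\phi(x)=x\cdot u(p(x))$. It is continuous, covers $\id_M$, preserves fibers, and restricts on each fiber to translation by an element of $S^1$, hence a homeomorphism; its inverse is $x\mapsto x\cdot u(p(x))^{-1}$, so $f_\phi$ is a bundle isomorphism. Because $u(m_0)=1$ it fixes a basepoint $x_0$ over $m_0$, so $(f_\phi)_*$ is a well-defined self-map of $\pi_1(E,x_0)$. To compute it, take $g=[\gamma]$ and set $c=p\circ\gamma$; then $f_\phi\circ\gamma$ is the loop $t\mapsto\gamma(t)\cdot u(c(t))=a\circ(\gamma,u\circ c)$, where $a\colon E\times S^1\to E$ is the action map and $(\gamma,u\circ c)$ is a loop in $E\times S^1$ based at $(x_0,1)$. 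Using $\pi_1(E\times S^1)=\pi_1(E)\times\pi_1(S^1)$ and the factorization $[(\gamma,u\circ c)]=([\gamma],0)\cdot(1,\deg(u\circ c))$, applying the homomorphism $a_*$ gives
\[
(f_\phi)_*[\gamma]=a_*[(\gamma,\mathrm{const}_1)]\cdot a_*[(\mathrm{const}_{x_0},u\circ c)]=[\gamma]\cdot j_*\big(\deg(u\circ c)\big).
\]
Since $\deg(u\circ c)=u_*([c])=\lambda(p(g))$ and $j_*(\lambda(p(g)))$ is central, the right-hand side equals $\phi(g)$, so $(f_\phi)_*=\phi$ as desired.

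I expect the crux to be the last identification, namely matching the purely algebraic twisting homomorphism $\lambda$ produced by Lemma~\ref{algebra} with the geometric gauge datum $u$ and then tracking its effect on $\pi_1$. The clean way to avoid a fiddly explicit homotopy is the product-group computation above, which funnels everything through the continuity of the action map $a$ and the splitting of $\pi_1(E\times S^1)$; the step that genuinely uses the standing hypothesis is the injectivity of $j_*$ (equivalently $\pi_2(M)=0$), which lets me identify the central fiber group with $\Z$ and read $\deg(u\circ c)$ and $\lambda(p(g))$ as the same integer. The remaining verifications — that $f_\phi$ is a bona fide bundle isomorphism and that it is basepoint-preserving — are routine.
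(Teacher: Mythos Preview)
Your proof is correct and follows essentially the same approach as the paper: both invoke Lemma~\ref{algebra} with $\phi_1=\id$, $\phi_2=\phi$ to extract $\lambda$, realize $\lambda$ by a map $M\to S^1$, and define $f_\phi$ as the corresponding gauge transformation of the principal bundle. The only difference is cosmetic: where the paper verifies $(f_\phi)_*=\phi$ by an explicit path-homotopy $(f\circ p\circ\gamma)\cdot\gamma\simeq (f\circ p\circ\gamma)\bullet\gamma$, you route the same computation through the homomorphism $a_*$ on $\pi_1(E\times S^1)\cong\pi_1(E)\times\pi_1(S^1)$, which is a clean repackaging of the same idea. One small remark: you invoke asphericity of $M$ to get injectivity of $j_*$, but the proposition's standing hypothesis only asks that $\pi_1(S^1)$ be central in $\pi_1(E)$ (with asphericity given merely as an example); however, the notation $\pi_1(S^1)<\pi_1(E)$ already treats $j_*$ as injective, so this is harmless.
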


\begin{proof}
We apply Lemma \ref{algebra} to the following commutative diagram, with $\phi_1=id$ and $\phi_2=\phi$.
\begin{align*}
\begin{CD}
		1@> >> \pi_1(S^1) @> >>	\pi_1(E) @>  p_*>> \pi_1(M) @ > >> 1 \\
		@.	@VVid  V  @VV \phi_j V   @VV id  V  @ . \\
		1@> >>  \pi_1(S^1) @>  >>  \pi_1(E) @> p_* >> \pi_1(M) @ > >> 1,
\end{CD}
\end{align*}
Then there is a homomorphism $\lambda\colon\pi_1(M)\to \pi_1(S^1)$, such that $\phi(g)=\lambda(p_*(g))\cdot g$.

By Theorem \ref{basic} (iii), we can assume that $E$ is a principal $S^1$-bundle, so we have an $S^1$-action on $E$. We do not distinguish between left and right actions since $S^1$ is abelian.

Since $S^1$ is a $K(\pi,1)$-space, there exists $f\colon M\to S^1$ realizing $\lambda\colon\pi_1(M)\to \pi_1(S^1)$.  Without loss of generality, we assume the basepoint $m_0\in M$ is mapped to $1\in S^1$, and we identify $S^1$ with the fiber $p^{-1}(m_0)\subset E$ over $m_0$.  Define $f_\phi\colon E\to E$ by
$$f_\phi(x)=f(p(x))\bullet x,$$
where $\bullet$ is the $S^1$-action. Then $f_\phi\colon E\to E$ is clearly a bundle isomorphism.

It suffices to show that $f_\phi$ induces $\phi$ on $\pi_1$. For $f\colon[0,1]\to S^1$ and $g\colon[0,1]\to E$, we define $f\bullet g\colon[0,1]\to E$ by $f\bullet g(t)=f(t)\bullet g(t)$. Take the basepoint $e_0\in p^{-1}(m_0)\subset E$ that corresponds to $1\in S^1$ under the identification between $S^1$ and $p^{-1}(m_0)$. For any based loop $\gamma\colon[0,1]\to E$ such that $\gamma(0)=\gamma(1)=e_0$, we need to check that $(f_{\phi})_*([\gamma])=\phi([\gamma])\in \pi_1(E,e_0)$. Here, $(f_{\phi})_*([\gamma])$ is represented by $f_{\phi}\circ \gamma\colon[0,1]\to E$, with 
$$f_{\phi}\circ \gamma(t)=f(p(\gamma(t)))\bullet \gamma(t)=\big((f\circ p\circ \gamma)\bullet \gamma\big) (t).$$ 
Also, we have 
$$\phi([\gamma])=\lambda(p_*([\gamma]))\cdot [\gamma]=f_*(p_*([\gamma]))\cdot [\gamma]=[f\circ p \circ \gamma]\cdot [\gamma]=[(f\circ p \circ \gamma)\cdot \gamma].$$
So $\phi([\gamma])$ is represented by 
$$(f\circ p \circ \gamma)\cdot \gamma=\big((f\circ p\circ \gamma)\cdot c_1\big)\bullet \big(c_{e_0}\cdot \gamma\big)\simeq (f\circ p\circ \gamma)\bullet \gamma.$$
Here $c_1$ and $c_{e_0}$ denote the constant maps to $1\in S^1$ and $e_0\in E$ respectively, and $\simeq$ denotes homotopy relative to $\{0,1\}$. 

So we verified that $(f_{\phi})_*([\gamma])=\phi([\gamma])$ for any $[\gamma]\in \pi_1(E,e_0)$, thus $(f_{\phi})_*=\phi$.
\end{proof}

\begin{lem} \label{1} 
Suppose $S^1\overset{j_i}\to E_i\ \overset{p_i}\to M$ are oriented $S^1$-bundles, $i=1,2$, where $M$ is  aspherical. Suppose $\phi\colon\pi_1(E_1)\to \pi_1(E_2)$ is an isomorphism such that the following diagram commutes.
$$\begin{CD}
	      1@> >> \pi_1(S^1) @>  >> \pi_1( E_1) @>  (p_1)_*>> \pi_1(M) @ > >> 1   \\
		@.	@VVid  V @ VV \phi V   @VV id V @. \\
		1@> >>  \pi_1(S^1) @>  >> \pi_1( E_2)@> (p_2)_* >> \pi_1(  M) @ > >> 1
\end{CD}$$
Then $E_1$ and $E_2$ are isomorphic as  oriented $S^1$-bundles.
\end{lem}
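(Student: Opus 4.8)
The plan is to reduce the claim to an equality of Euler classes and then argue cohomologically. By Theorem \ref{basic}(ii), two oriented $S^1$-bundles over $M$ are isomorphic as oriented bundles if and only if they have the same Euler class in $H^2(M;\Z)$; hence it suffices to show $e(E_1)=e(E_2)$. In this way the geometric content collapses to comparing the classes of two central extensions.

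First I would record the asphericity needed to pass between spaces and groups. Since $M$ is aspherical, the homotopy long exact sequence of the fibration $S^1\to E_i\to M$ gives $\pi_k(E_i)=0$ for all $k\ge 2$ and yields exactly the central extension $1\to\pi_1(S^1)\to\pi_1(E_i)\to\pi_1(M)\to 1$ displayed in the statement. In particular each $E_i$ is a $K(\pi_1(E_i),1)$ and $M$ is a $K(\pi_1(M),1)$, so $H^2(M;\Z)\cong H^2(\pi_1(M);\Z)$, and isomorphism classes of bundles, homotopy classes of maps $M\to\mathbb{C}P^\infty=K(\Z,2)$, and equivalence classes of central extensions all live in one and the same group.

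The crux is the identification of the Euler class with the extension class: for an oriented $S^1$-bundle over an aspherical base, the class in $H^2(\pi_1(M);\Z)$ classifying the central extension above coincides with $e(E_i)$ under $H^2(\pi_1(M);\Z)\cong H^2(M;\Z)$, once the fiber group $\pi_1(S^1)$ is identified with $\Z$ via the chosen orientation. I would invoke this standard fact, or prove it quickly by comparing the classifying map $M\to\mathbb{C}P^\infty$ of $E_i$ (Theorem \ref{basic}(i)) with the map classifying the extension. This is the step that requires genuine care, and I expect it to be the main obstacle: one must keep track of the orientation identification of the fiber so that the comparison is sign-correct, and one must check the naturality of the dictionary between Euler classes and extension classes.

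Finally, the commutative diagram in the statement is precisely an \emph{equivalence} of central extensions: the outer vertical maps are the identity on the kernel $\pi_1(S^1)$ and on the quotient $\pi_1(M)$, and $\phi$ is an isomorphism of the middle groups. Equivalent central extensions represent the same class in $H^2(\pi_1(M);\Z)$, and because the induced map on the fibers $\pi_1(S^1)$ is the identity, hence orientation-preserving on the fiber, this equality holds with correct signs. Therefore $e(E_1)=e(E_2)$, and Theorem \ref{basic}(ii) yields an isomorphism $E_1\cong E_2$ of oriented $S^1$-bundles.
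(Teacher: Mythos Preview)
Your proposal is correct and follows essentially the same route as the paper: both argue that the commuting diagram exhibits an equivalence of $\Z$-central extensions, hence the extension classes agree in $H^2(\pi_1(M);\Z)$, and since $M$ is aspherical this group coincides with $H^2(M;\Z)$ with the extension class matching the Euler class, so Theorem~\ref{basic}(ii) gives the bundle isomorphism. The paper's proof is terser (it simply cites the equality of extension classes and the identification with the Euler class as known facts), but the logical skeleton is identical to yours.
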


\begin{proof} This is a known fact.  First  the Euler classes of these two $\mathbb{Z}$-central extensions are equal to each other in $H^2(\pi_1(M);\mathbb{Z})$, see  for example \cite[pages 234-235]{FS}. Since $M$ is aspherical,
 we have a natural isomorphism $H^2(\pi_1(M);\mathbb{Z})\to H^2(M;\mathbb{Z})$, and the image of the Euler class of each $\mathbb{Z}$-central extension is the Euler class of the $S^1$-bundle. 
 \end{proof}

We are now ready to prove Theorem \ref{main3.1'}. We first verify that $f_*$ preserves  the fiber group, from which, by a sequence of bundle reductions, we reach the place to apply Lemma \ref{1}, Theorem \ref{basic} and Proposition \ref{selfmap},   and to prove that in the homotopy class of $f$ there is a fiber-preserving map.

\begin{proof}[Proof of Theorem \ref{main3.1'}]
\noindent{\bf Step 0}. {\em $f_*$ preserves the center of the group.}
Since $\tilde{f}\colon E_1\to E_2$ has non-zero degree, $\tilde{f}_*(\pi_1(E_1))<\pi_1(E_2)$ is a finite index subgroup. Let $q\colon E_2'\to E_2$ be the finite cover of $E_2$ corresponding to $\tilde{f}_*(\pi_1(E_1))$. Then we have a lifting map $\tilde{g}\colon E_1\to E_2'$, such that $\tilde{f}=q\circ \tilde{g}$, and  $\tilde{g}_*\colon \pi_1(E_1)\to \pi_1(E_2')$ is surjective.
Here $E_2'$ has an induced oriented $S^1$-bundle structure $p_2'\colon E_2'\to M_2'$, where the base manifold $M_2'$ is a finite cover of $M_2$. Then $q$ is a fiber-preserving map.

Since $E_1$ is an orientable $S^1$-bundle and $M_1$ is aspherical, the fiber subgroup $\pi_1(S^1)<\pi_1(E_1)$ lies in the center of $\pi_1(E_1)$. Since $\pi_1(M_2)$ is SCF, $\pi_1(M_2')$ is center free. Thus $(p_2')_*\circ \tilde g_* (\pi_1(S^1))<\pi_1(M_2')$ must be the trivial subgroup, 
and $\tilde g_*(\pi_1(S^1))<\pi_1(E_2')$ must be contained in the fiber subgroup $\pi_1(S^1)<\pi_1(E_2')$. 
So  $\tilde f_*(\pi_1(S^1))<\pi_1(E_2)$ must be contained in the fiber subgroup $\pi_1(S^1)<\pi_1(E_2)$. 
Below we use $\phi$ to denote the homomorphism $\tilde{f}_*\colon\pi_1(E_1)\to \pi_1(E_2)$. 
So we have the following commutative diagram: 
\begin{align}\label{3.1}
\begin{CD}
		1@> >> \pi_1(S^1) @>  >>	\pi_1( E_1) @>  (p_1)_*>> \pi_1(M_1)  @ > >> 1 \\
		@.	@VV\phi|  V  @VV \phi V   @VV \bar{\phi} V @.  \\
		1@> >>  \pi_1(S^1) @>  >>      \pi_1( E_2)@> (p_2)_* >> \pi_1(  M_2) @ > >> 1.
\end{CD}
\end{align}

\smallskip

\noindent{\bf Step I}. {\em Reduction to the case $M_1=M_2$.}
Since $M_2$ is aspherical, there exists a map $g\colon M_1\to M_2$, such that $g_*:\pi_1(M_1)\to \pi_1(M_2)$ equals $\bar{\phi}$. Let $p_1'\colon E_1'\to M_1$ be the pull-back bundle of $p_2\colon E_2\to M_2$ via $g\colon M_1\to M_2$. Then we have a fiber-preserving bundle map $\tilde{f}':E_1'\to E_2$, and the following diagram commutes.

\begin{align}\label{3.2}
\begin{CD}
	      1@> >> \pi_1(S^1) @>  >> \pi_1( E_1') @>  (p_1')_*>> \pi_1(M_1) @ > >> 1   \\
		@.	@VVid  V @ VV \tilde{f}'_* V   @VV g_*=\bar{\phi} V @. \\
		1@> >>  \pi_1(S^1) @>  >> \pi_1( E_2)@> (p_2)_* >> \pi_1(  M_2) @ > >> 1 ,
\end{CD}
\end{align}

Since the left vertical arrow in diagram (\ref{3.2}) is the identity, the right square of diagram (\ref{3.2}) gives the fiber product of $\bar{\phi}$ and $(p_2)_*$. Comparing the right squares of diagrams (\ref{3.1}) and (\ref{3.2}) and by the universal property of the fiber product, there exists a homomorphism $\psi\colon\pi_1(E_1)\to \pi_1(E_1')$, such that $\tilde{f}_*=\phi=\tilde{f}'_*\circ \psi\colon\pi_1(E_1)\to \pi_1(E_2)$ and  $(p_1)_*=(p_1')_*\circ \psi\colon\pi_1(E_1)\to \pi_1(M_1)$. 

Since $M_1$ is aspherical, $E_1'$ is also aspherical by the homotopy exact sequence for fiber bundles. So there is a map $\tilde{h}\colon E_1\to E_1'$ such that $\tilde{h}_*=\psi\colon\pi_1(E_1)\to \pi_1(E_1')$, and we have $\tilde{f}_*=\tilde{f}'_*\circ \psi=\tilde{f}'_*\circ \tilde{h}_*=(\tilde{f}'\circ \tilde{h})_*$. Since $E_2$ is aspherical, we have that $\tilde{f}$ and $\tilde{f}'\circ \tilde{h}$ are homotopic to each other. Since $\tilde{f}'$ is fiber-preserving, it suffices to prove that $\tilde{h}\colon E_1\to E_1'$ is homotopic to a fiber-preserving map.

Note that $\tilde{h}\colon E_1\to E_1'$ has non-zero degree and we have the following commutative diagram:
\begin{align}\label{3.3}
\begin{CD}
	      1@> >> \pi_1(S^1) @>  >> \pi_1( E_1) @>  (p_1)_*>> \pi_1(M_1) @ > >> 1   \\
		@.	@VV\psi|  V @ VV \tilde{h}_*=\psi V   @VV id V @. \\
		1@> >>  \pi_1(S^1) @>  >> \pi_1( E_1')@> (p_1')_* >> \pi_1(  M_1) @ > >> 1.
\end{CD}
\end{align}
Here $\psi|: \pi_1(S^1)\to \pi_1(S^1)$ is a homomorphism that sends to $1$ to $k\ne 0\in \mathbb{Z}\cong \pi_1(S^1)$. Up to changing orientation, we can assume that $k> 0$.

\smallskip

\noindent{\bf Step II}. {\em Reduction to the case $k=1$.}
By the commutative diagram (\ref{3.3}), we can check that $\tilde{h}_*=\psi\colon\pi_1(E_1)\to \pi_1(E_1')$ is injective,  $\pi_1(S^1)\cap \psi(\pi_1(E_1))=k\mathbb{Z}<\pi_1(S^1)\cong \mathbb{Z}$, and $[\pi_1(E_1')\colon\psi(\pi_1(E_1))]=k$. Let $q\colon E_1''\to E_1'$ be the $k$-sheet covering of $E_1'$ corresponding to $\psi(\pi_1(E_1))<\pi_1(E_1')$. Then $E_1''$ has an induced $S^1$-bundle structure over $M_1$, and $q$ is a fiber-preserving map. 

Let $\tilde{k}\colon E_1\to E_1''$ be the lifting map such that $\tilde{h}=q\circ \tilde{k}\colon E_1\to E_1'$. It suffices to prove that $\tilde{k}$ is homotopic to a fiber-preserving map. On the group level, by diagram (\ref{3.3}), we have the following commutative diagram where  $\tilde{k}_*\colon\pi_1(E_1)\to \pi_1(E_1'')$ is an isomorphism. 
\begin{align}\label{3.4}
\begin{CD}
	      1@> >> \pi_1(S^1) @>  >> \pi_1( E_1) @>  (p_1)_*>> \pi_1(M_1) @ > >> 1   \\
		@.	@VVid  V @ VV \tilde{k}_* V   @VV id V @. \\
		1@> >>  \pi_1(S^1) @>  >> \pi_1( E_1'')@> (p_1'')_* >> \pi_1(  M_1) @ > >> 1,
\end{CD}
\end{align}

\smallskip

\noindent{\bf Step III}. {\em Finishing the proof.}
Since $M_1$ is aspherical, we have a natural isomorphism $H^2(\pi_1(M_1);\mathbb{Z})\cong H^2(M;\mathbb{Z})$. Let $e_1,e_1''\in H^2(M;\mathbb{Z})\cong H^2(\pi_1(M_1);\mathbb{Z})$ be the Euler classes of the oriented $S^1$-bundles $p_1\colon E_1\to M_1$ and $p_1''\colon E_1''\to M_1$, respectively.  The commutative diagram (\ref{3.4}) implies $e_1=e_1''$ by Lemma \ref{1}. Then Theorem \ref{basic} (ii) implies that $p_1\colon E_1\to M_1$ and $p_1''\colon E_1''\to M_1$ are isomorphic oriented $S^1$-bundles over $M_1$. 

Finally Proposition \ref{selfmap} implies the existence of a (fiber-preserving) bundle isomorphism $\tilde{k}'\colon E_1\to E_1''$, such that $\tilde{k}'_*=\tilde{k}_*\colon\pi_1(E_1)\to \pi_1(E_1'')$. Since $E_1''$ is aspherical, $\tilde{k}$ is homotopic to the fiber-preserving map $\tilde{k}'$. This finishes the proof.
\end{proof}

\section{Mapping degree sets of fiber-preserving maps between $S^1$-bundles}

In this section we will prove Theorem \ref{main1}. 

\smallskip

First we state a fact about vertical maps that will be used below:

\begin{lem}\label{h-v} 
Let  $M_i$ be a closed oriented $n$-manifold, and let 
$S^1\overset{j_i}\to E_i\ \overset{p_i}\to M_i$ be an oriented $S^1$-bundle, $i=1,2$. 
Suppose  $ f\colon E_1\to E_2$ is a fiber-preserving non-zero degree map that induces $\bar f\colon M_1\to M_2$. Then we have a factorisation
  $$ f\colon E_1\overset{v}\to  E_1'\overset{f'}\to E_2,$$
where $v$   is a vertical map, and $ f'$ is a bundle map that induces $\bar f\colon M_1\to M_2$.
\end{lem}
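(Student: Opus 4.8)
The plan is to take $E_1'$ to be the pull-back bundle $\bar f^*(E_2)$ over $M_1$ and to exploit the universal property of the pull-back. By Theorem \ref{basic} and diagram (\ref{d1}), the pull-back $p_1'\colon E_1':=\bar f^*(E_2)\to M_1$ comes equipped with a canonical bundle map $f'\colon E_1'\to E_2$ covering $\bar f$. Concretely, $E_1'=\{(x,y)\in M_1\times E_2\mid \bar f(x)=p_2(y)\}$, with $p_1'(x,y)=x$ and $f'(x,y)=y$; the restriction of $f'$ to each fiber of $E_1'$ over $x$ (namely $\{x\}\times p_2^{-1}(\bar f(x))$) is a homeomorphism onto $p_2^{-1}(\bar f(x))$, so $f'$ is indeed a bundle map inducing $\bar f$. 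Since $E_2$ is oriented, $E_1'$ inherits a natural orientation as a pull-back.

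Next I would define $v\colon E_1\to E_1'$ by $v(z)=(p_1(z),f(z))$. This lands in $E_1'$ because $f$ is fiber-preserving inducing $\bar f$, i.e. $p_2\circ f=\bar f\circ p_1$, so the compatibility condition $\bar f(p_1(z))=p_2(f(z))$ holds. From $p_1'\circ v=p_1$ one reads off that $v$ covers the identity on $M_1$ and carries the fiber of $E_1$ over $x$ into the fiber of $E_1'$ over $x$; hence $v$ is fiber-preserving and induces the identity on $M_1$, that is, $v$ is vertical in the sense defined above. Finally $f'\circ v(z)=f'(p_1(z),f(z))=f(z)$, so $f=f'\circ v$, which is exactly the asserted factorisation $f\colon E_1\overset{v}\to E_1'\overset{f'}\to E_2$.

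The argument is essentially formal, being an instance of the universal property of the pull-back, so I do not anticipate a genuine obstacle. The only points requiring (routine) checking are that $v$ is well-defined and continuous, which is immediate from the explicit formula, and that $v$ restricts on each $S^1$-fiber to a self-map of the circle, which follows since both $f$ and the identity on the base are fiber-preserving. Note that the non-zero degree hypothesis plays no role in producing the factorisation itself; it is recorded only for context, since in general $v$ restricts to a degree-$k$ self-map on the fiber (the ``vertical degree'' of $f$), and this integer $k$ is precisely what enters the mapping degree computation of Theorem \ref{main1}. The content of the lemma is thus that a fiber-preserving map factors canonically through the pull-back of the target bundle, separating its base-level behaviour (captured by the bundle map $f'$) from its fiber-level behaviour (captured by the vertical map $v$).
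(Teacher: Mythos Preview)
Your proof is correct and essentially identical to the paper's own argument: both take $E_1'=\bar f^*(E_2)$ with its canonical bundle map $f'$ to $E_2$, and define $v(z)=(p_1(z),f(z))$ using the universal property of the pull-back. The only difference is cosmetic (notation and your additional remarks about the r\^ole of the non-zero degree hypothesis), so nothing further needs to be said.
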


\begin{proof}
This can be proved by a standard pull-back argument: Let $E_1'$ be the pull-back bundle of $E_2\xrightarrow{p_2}M_2$ via $\bar{f}\colon M_1\to M_2$, where 
$$E_1'=\{(x_1,e_2)\ |\ x_1\in M_1, e_2\in E_2, \bar{f}(x_1)=p_2(e_2)\}.$$ Then we have an $S^1$-bundle $q\colon E_1'\to M_1$ over $M_1$ defined by $q(x_1,e_2)=x_1$, and a bundle map $f'\colon E_1'\to E_2$ defined by $f'(x_1,e_2)=e_2$ that induces $\bar{f}\colon M_1\to M_2$.

We define $v\colon E_1\to E_1'$ by $v(e_1)=(p_1(e_1),f(e_1))$; this is a fiber-preserving map that induces the identity map on $M_1$ and satisfies $f=f'\circ v$.
\end{proof}

From now on,  we will often denote by $\tilde M_a$  the oriented circle bundle over a closed oriented $n$-manifold $M$ with Euler class $e(\tilde M_a)=a\in H^2(M;\mathbb{Z})$. 

\begin{prop}\label{cal1}
Suppose  $M$ is a closed oriented $n$-manifold,  $a, b \in H^2(M;\mathbb{Z})$ and $k\ne 0$ be an integer. Then the following are equivalent:
\begin{enumerate}
\item $ka=b$.
\item There exists a vertical  map   $\tilde M_{a}\to \tilde M_{b}$ of degree $k$.
\end{enumerate}
 \end{prop}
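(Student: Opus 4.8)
The plan is to pass to the complex-line-bundle model: by Theorem \ref{basic}, an oriented $S^1$-bundle with Euler class $c\in H^2(M;\mathbb{Z})$ is the unit circle bundle $S(L_c)$ of a complex line bundle $L_c$ with $c_1(L_c)=c$, oriented by its complex structure. Since a vertical map $v\colon \tilde M_a\to\tilde M_b$ covers $\id_M$ and $M$ is connected, its restriction to each fibre is a self-map of $S^1$ whose degree is locally constant, hence a fixed integer $d$; because the induced base map is the identity, the multiplicativity of degree for fibre-preserving maps gives $\deg(v)=d$. Thus ``vertical of degree $k$'' is the same as ``vertical, fibrewise of degree $k$'', and I will use the two descriptions interchangeably.

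For the implication $(1)\Rightarrow(2)$, I would build the map explicitly by taking fibrewise $k$-th powers. Assuming $ka=b$, set $L=L_a$; then $c_1(L^{\otimes k})=k\,c_1(L)=ka=b$ (interpreting $L^{\otimes k}$ as $(L^*)^{\otimes|k|}$ when $k<0$), so $S(L^{\otimes k})\cong\tilde M_b$. The map sending a unit vector $u\in L_x$ to $u^{\otimes k}\in (L^{\otimes k})_x$ restricts on each fibre circle to $z\mapsto z^k$ and covers $\id_M$, hence is a vertical map of fibrewise degree $k$; by the previous paragraph its degree is $k$.

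For the implication $(2)\Rightarrow(1)$, I would read off the Euler classes from the Gysin sequences of the two bundles together with the fibre-integration behaviour of $v$. Writing $p_a,p_b$ for the projections and $\pi_{a*},\pi_{b*}$ for integration along the fibre, the relation $p_b\circ v=p_a$ gives $v^*p_b^*=p_a^*$, while fibrewise degree $k$ gives the projection formula $\pi_{a*}\circ v^*=k\,\pi_{b*}$. These identities say that $v$ induces a morphism between the two Gysin long exact sequences in which the total-space terms map by $v^*$, the copies of $H^*(M)$ occurring as the source of $p^*$ map by the identity, and the copies occurring as the target of $\pi_*$ map by multiplication by $k$. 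Chasing the square adjacent to the Euler-class cup product then yields $b\cup x=(ka)\cup x$ for every $x\in H^*(M)$, and $x=1$ gives $b=ka$. Alternatively, and more hands-on, one may choose a good cover, put $v$ in the local normal form $(x,z)\mapsto (x,h_\alpha(x)z^k)$, and compare transition cocycles to obtain $g^b_{\alpha\beta}=(g^a_{\alpha\beta})^k\,(h_\alpha h_\beta^{-1})$, that is $[g^b]=k[g^a]$ under $H^1(M;\underline{S^1})\cong H^2(M;\mathbb{Z})$.

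The step I expect to be the main obstacle is the bookkeeping in $(2)\Rightarrow(1)$: verifying rigorously that a vertical map of fibrewise degree $k$ genuinely produces the factor $k$ in the comparison of Euler classes. In the Gysin approach this is the projection formula $\pi_{a*}v^*=k\,\pi_{b*}$ together with the claim that the resulting ladder is an honest morphism of long exact sequences, which ultimately rests on the fibrewise-degree-$k$ map pulling the Thom class of $L_b$ back to $k$ times that of $L_a$. In the cocycle approach the corresponding difficulty is justifying that $v$ can be brought into the normal form $z\mapsto h_\alpha(x)z^k$ by a fibre-preserving homotopy whose local data $h_\alpha$ patch to a global cochain. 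Everything else—the line-bundle model, the power-map construction, and the degree computation—is routine.
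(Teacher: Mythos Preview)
Your proof is correct but takes a genuinely different route from the paper's. For $(1)\Rightarrow(2)$, the paper uses the principal-bundle structure: it quotients $\tilde M_a$ by the $\mathbb{Z}_k\subset S^1$ action to obtain a vertical degree-$k$ map onto some $\tilde M_c$, and then \emph{invokes the already-proved direction} $(2)\Rightarrow(1)$ to conclude $c=ka=b$. Your tensor-power construction is more self-contained, relying only on the standard identity $c_1(L^{\otimes k})=k\,c_1(L)$. For $(2)\Rightarrow(1)$, the paper argues at the most elementary level: it takes a CW structure on $M$ and a section $s$ of $\tilde M_a$ over the $1$-skeleton, observes that $v\circ s$ is then a section of $\tilde M_b$ over the $1$-skeleton, and compares the obstruction cocycles directly---the fibrewise degree-$k$ map multiplies the wrapping number of $s(\partial\Delta^2)$ around the fibre by $k$ on each $2$-cell, giving $\beta=k\alpha$ in $C^2(X)$. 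Your Gysin/Thom-class argument and the \v{C}ech alternative both work, but they package the same local computation inside heavier machinery; the paper's approach sidesteps exactly the ``bookkeeping'' you flag as the main obstacle by going straight to the obstruction-theoretic definition of the Euler class.
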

 
\begin{proof} 
We first recall the obstruction definition of the Euler class of an oriented $S^1$-bundle $p_a\colon\tilde{M}_a\to M$. 	

First assume that $M$ admits a CW-complex structure $X$, denote its $1$-skeleton by $X^{(1)}$, and take a section $s\colon X^{(1)}\to \tilde{M}_a$ such that $p_a\circ s$ equals the inclusion $X^{(1)}\to M$. The Euler class $a\in H^2(M;\mathbb{Z})$ is represented by a cellular $2$-cocycle $\alpha\in C^2(X)$ defined as follows: For any $2$-cell $\Delta^2$ of $X$ with an orientation, $\alpha(\Delta^2)$ equals the number of $s(\partial \Delta^2)$ wrapping around the $S^1$-fiber. This makes sense since the restriction of the $S^1$-bundle on $\Delta^2$ is trivial.

Now we prove the proposition.

\noindent $(2) \Rightarrow (1)$: If we have a vertical map $v\colon \tilde M_a\to \tilde M_b$ of degree $k$, then the restriction of $v$ on each $S^1$-fiber of $\tilde{M}_a$ is a degree-$k$ map to the image $S^1$-fiber of $\tilde{M}_b$. We take a section $s\colon X^{(1)}\to \tilde{M}_a$ of $\tilde{M}_a\to M$ as in the obstruction definition of Euler class. Since $v\colon\tilde{M}_a\to \tilde{M}_b$ is vertical, $v\circ s\colon X^{(1)}\to \tilde{M}_b$ is a section 
of $\tilde{M}_b\to M$. For any $2$-cell $\Delta^2$ of $X$ with an orientation, the number of $v\circ s(\partial \Delta^2)$ wrapping around the $S^1$-fiber of $\tilde{M}_b$ equals $k$ times the number of $s(\partial \Delta^2)$ wrapping around the $S^1$-fiber of $\tilde{M}_a$. So we have $\beta=k\alpha \in C^2(X)$, and $b=ka\in H^2(M;\mathbb{Z})$ holds.

\noindent $(1) \Rightarrow (2)$: Suppose that $b=ka \in H^2(M;\mathbb{Z})$. By Theorem \ref{basic} (iii),  we can consider all $S^1$-bundles as principal  $S^1$-bundles.
Then we have a group embedding
$$\mathbb{Z}_k\subset S^1 \subset \text{Homeo}_0(\tilde M_a),$$ and the $\mathbb{Z}_k$-action induces an $S^1$-bundle 
$$S^1/\mathbb{Z}_k \to \tilde M_a/\mathbb{Z}_k \cong \tilde M_c \to M$$ for some $c\in H^2(M;\mathbb{Z})$,
and we have a vertical map $v_k\colon \tilde M_a \to \tilde M_c$ of degree $k$. By the previous paragraph, we have $c=ka$, thus $c=b$. So $v_k$ is a vertical map from $\tilde{M}_a$ to $\tilde{M}_b$ of degree $k$, as desired. 

For a general $M$, we can take a homotopy equivalence $h\colon X\to M$ from a CW-complex $X$ to $M$ (see for example Corollary A.12 of \cite{Ha2}), and apply a similar argument as above.
\end{proof}

We rewrite Theorem \ref{main1} as follows:

\begin{thm}[Theorem \ref{main1}]\label{main2.1} Suppose $M$ and $N$ are closed oriented $n$-manifolds,
 $a\in H^2(M;\mathbb{Z})$ and $b\in H^2(N;\mathbb{Z})$.
Then the mapping degree set of fiber-preserving maps from $\tilde M_a$ to $\tilde N_b$ is given by
$$D_{FP}(\tilde M_a, \tilde N_b)=\{0\} \cup\{k\cdot \deg(f)\ |\ k\ne 0,  f\colon M\to N, \deg(f)\ne 0 \, \text{such that}\, f^\#(b)=ka\},$$
where $f^\#\colon H^2(N;\mathbb{Z})\to H^2(M;\mathbb{Z})$ is  the induced homomorphism.
\end{thm}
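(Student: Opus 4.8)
The plan is to prove the two inclusions separately, the membership $0\in D_{FP}(\tilde M_a,\tilde N_b)$ being immediate, and to extract everything else by assembling the factorization Lemma \ref{h-v} with Proposition \ref{cal1}. The one genuinely geometric ingredient, which I isolate first, is the degree of a bundle map: if $f'\colon E_1'\to \tilde N_b$ is an orientation-compatible bundle map covering $\bar f\colon M\to N$, then $f'$ restricts to an orientation-preserving homeomorphism on each fiber, and since both total spaces carry the local product orientations of base and fiber, multiplicativity of degree in oriented fiber bundles gives $\deg(f')=\deg(\bar f)$. Likewise a vertical map has degree equal to its fiber-wrapping number, exactly as exploited in Proposition \ref{cal1}.

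For the inclusion $\subseteq$, let $d=\deg(f)\ne 0$ be realized by a fiber-preserving map $f\colon \tilde M_a\to \tilde N_b$ inducing $\bar f\colon M\to N$. By Lemma \ref{h-v} I factor $f=f'\circ v$, with $v\colon \tilde M_a\to E_1'$ vertical and $f'\colon E_1'\to \tilde N_b$ a bundle map covering $\bar f$, where $E_1'=\bar f^*(\tilde N_b)$. By the pull-back square (\ref{d1}) together with Theorem \ref{basic}(ii), the Euler class of $E_1'$ is $\bar f^\#(b)$, so $E_1'=\tilde M_{\bar f^\#(b)}$. Writing $k=\deg(v)$, multiplicativity gives $d=\deg(f')\cdot\deg(v)=\deg(\bar f)\cdot k$; since $d\ne 0$, both $k\ne 0$ and $\deg(\bar f)\ne 0$. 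Finally $v\colon \tilde M_a\to \tilde M_{\bar f^\#(b)}$ is vertical of degree $k$, so Proposition \ref{cal1} yields $ka=\bar f^\#(b)$. Taking $\bar f$ as the base map required by the statement, this exhibits $d=k\cdot\deg(\bar f)$ as an element of the right-hand side.

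For the inclusion $\supseteq$, start from $k\ne 0$ and $g\colon M\to N$ with $\deg(g)\ne 0$ and $g^\#(b)=ka$. The pull-back bundle $g^*(\tilde N_b)$ has Euler class $g^\#(b)=ka$, hence equals $\tilde M_{ka}$, and diagram (\ref{d1}) supplies a bundle map $g'\colon \tilde M_{ka}\to \tilde N_b$ covering $g$ with $\deg(g')=\deg(g)$. Since $ka=k\cdot a$, the implication $(1)\Rightarrow(2)$ of Proposition \ref{cal1} provides a vertical map $v_k\colon \tilde M_a\to \tilde M_{ka}$ of degree $k$. The composite $g'\circ v_k\colon \tilde M_a\to \tilde N_b$ is then fiber-preserving of degree $k\cdot\deg(g)$, giving the desired membership. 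The only point requiring care is the bookkeeping of orientations, so that the fiber degrees are genuinely $+1$ and the multiplicativity formula $\deg(f'\circ v)=\deg(f')\cdot\deg(v)$ holds on the nose; I expect this to be the main obstacle, everything else being a direct assembly of Lemma \ref{h-v} and Proposition \ref{cal1}.
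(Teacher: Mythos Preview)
Your proposal is correct and follows essentially the same route as the paper's proof: both directions are obtained by combining the factorization of Lemma \ref{h-v} (fiber-preserving map $=$ vertical map followed by a pull-back bundle map) with Proposition \ref{cal1} and the naturality of the Euler class under pull-back, together with multiplicativity of degree. The paper's argument is organized identically, even down to the commutative diagram you anticipate for the $\supseteq$ direction; your extra remark about orientation bookkeeping is not something the paper belabors, but it is the right thing to flag.
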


\begin{proof} 
We first prove that the left-hand set is a subset of the right-hand set.
Suppose  $\tilde f\colon \tilde M_a\to \tilde N_b$ is a fiber-preserving map which induces $f\colon M\to N$.
Then by Lemma \ref{h-v}, we have $\tilde f=\tilde f_2\circ \tilde f_1$, where $\tilde f_1 \colon \tilde M_a\to \tilde M_c$  is a vertical map, and $\tilde f_2\colon \tilde M_c \to \tilde N_b$ is a bundle map that induces $f\colon M\to N$. For the bundle map, we have 
$e(\tilde M_c)=f_2^\#(e(\tilde N_b)),$ that is $c=f^\#(b)$.
For the vertical map, we have $ka=c$ by Proposition \ref{cal1}, where $k$ is the degree of $\tilde f_1$, therefore
$$ka=f^\#(b).$$

By the product formula of mapping degrees of fiber bundles, we have 
$$\deg (\tilde f)=k\cdot \deg (f)$$ 
and the proof of the first part is done.

Next, we prove the converse inclusion. 
Suppose $f\colon M\to N$ is a non-zero degree map and $f^\#(b)=ka$. Then we take the pull-back bundle of $\tilde{N}_b\to N$ as in diagram (\ref{d1}).
Since the map $\tilde f$ in (\ref{d1}) is a bundle map, we have 
$$e( \tilde{f}^*(\tilde N_b))=f^\#(e(\tilde N_b))= f^\#(b),$$
where $f^\#\colon H^2(N;\mathbb{Z})\to  H^2(M;\mathbb{Z})$ is the induced homomorphism, which implies that 
$$ f^*(\tilde N_b)= \tilde M_{f^\#(b)} = \tilde M_{ka}. $$
So we have the right square of the following diagram (\ref{4}). 

\begin{align}\label{4}
\begin{CD}
		 \tilde M_{a} @>  \tilde v>>	 \tilde M_{ka} @>  \tilde f>> \tilde N_b  \\
			@VV p_0 V @VV p_1 V   @VV p_2 V  \\
		M @> \text{id} >>       M @> f >> N
\end{CD}
\end{align}

By Proposition \ref{cal1}, we have a vertical map  $\tilde v\colon\tilde M_{a} \to   \tilde M_{ka}$ of degree $k$, so we have the left square of (\ref{4}). 
Note also that $\deg(\tilde f)=\deg{f}$.
Then  
$$\deg(\tilde f \circ \tilde v)=\deg(\tilde v)\cdot \deg(\tilde f)=k\cdot \deg(f),$$
and the proof of the second inclusion is completed. \end{proof}

We end our discussion in this section with the next two observations, which will make the picture of fiber-preserving maps between $S^1$-bundles more complete.

 \begin{lem}\label{not torsion} Suppose $M$ and $N$ are closed oriented $n$-manifolds and $a\in H^2(M;\mathbb{Z})$, $b\in H^2(N;\mathbb{Z})$. 
If there is a fiber-preserving  map $\tilde f\colon \tilde M_a\to \tilde N_b$  of non-zero degree, then $a$ is torsion if and only 
$b$ is torsion.
\end{lem}

\begin{proof}
Let $k$ be the degree of $f$ on the $S^1$-fiber, and let $f$ be the induced map on the base manifolds $M\to N$. Then $ka= f^\#(b)$
by the proof of Theorem \ref{main1}. Since $\tilde f$ is a non-zero degree map, $f$ is also a non-zero degree map and $k\ne 0$. 

If $b$ is a torsion class, then $ f^\#(b)$ is torsion. So $a$ is torsion since $k\ne 0$, and we proved one direction of the lemma. Conversely, if $a$ is torsion, then $ f^\#(b)$ is torsion. Since $ f$ is a non-zero degree map, the induced homomorphism on cohomology modulo torsion $$f^\#\colon H^*(N;\mathbb{Z})/tors \to H^*(M;\mathbb{Z})/tors$$ is injective. So $b$ is torsion, and we proved the other direction of the lemma.
\end{proof}

Define the {\em mapping degree set of vertical  maps} by
\[
D_{V}(\tilde M_a,\tilde M_b):=\{d\in\Z \ |\, \text{there exists a vertical map} \ \tilde f\colon \tilde M_a\to \tilde M_b, \ \deg(\tilde f)=d\}.
\]

Clearly $D_{V}(\tilde M_a,\tilde M_b)\subset  D_{FP}(\tilde M_a,\tilde M_b).$

\begin{cor}\label{cal2}
Suppose  $a, b \in H^2(M;\mathbb{Z})$. Then $D_{V}(\tilde M_a,\tilde M_b)$ is 
\begin{itemize}
\item[(1)] the empty set,  if $b\notin \left< a \right>$;  
\item[(2)] an infinite set,
if $b=ka$, $k\ne 0$ and $a$ is torsion;
\item[(3)] $\{ k\}$, if $b=ka$, $k\ne 0$ and $a$ is not torsion.
\end{itemize}
 \end{cor}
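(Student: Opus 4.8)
The plan is to reduce the whole statement to counting the integer solutions $k$ of the single equation $ka=b$ in $H^2(M;\mathbb{Z})$, with Proposition~\ref{cal1} serving as the bridge between such solutions and vertical maps. Concretely, I would first prove the unified identity
$$D_{V}(\tilde M_a,\tilde M_b)=\{k\in\mathbb{Z}\ |\ ka=b\}.$$
For $k\ne 0$ this is precisely Proposition~\ref{cal1}: a vertical map of degree $k\ne 0$ exists if and only if $ka=b$, so the nonzero part of the identity is immediate.

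The one genuinely delicate point is the degree-zero contribution. A vertical map of degree $0$ restricts to a null-homotopic map on each $S^1$-fibre, and I would argue that any such map is vertically homotopic to one whose image lies in a section of $\tilde M_b$; hence $\tilde M_b$ admits a section and $b=0$ by the Euler-class classification in Theorem~\ref{basic}. Conversely, if $b=0$ then $\tilde M_b\cong M\times S^1$ and the fibrewise-constant map $e\mapsto(p(e),\ast)$ is a vertical map of degree $0$. Thus $0\in D_{V}(\tilde M_a,\tilde M_b)$ if and only if $b=0$, i.e. exactly the $k=0$ instance of the identity. The cleanest way to make the ``null-homotopic fibre $\Rightarrow$ section'' step precise is a fibrewise barycentre retraction: a null-homotopic $f\colon S^1\to S^1$ lifts to $\tilde f\colon S^1\to\mathbb{R}$, and the straight-line homotopy from $\tilde f$ to its mean value is natural under rotation of the target circle, so it globalises over $\tilde M_b$ and pushes the map onto a section.

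Granting the identity, the three cases become elementary algebra in the finitely generated abelian group $H^2(M;\mathbb{Z})$. In case (1), $b\notin\langle a\rangle$ means $ka=b$ has no solution at all, so $D_{V}=\emptyset$. In case (2), writing $a$ for a torsion class of order $t$ and $b=ka$ with $k\ne 0$, the equation $ma=b$ holds exactly when $m\equiv k\pmod{t}$, giving the infinite progression $k+t\mathbb{Z}$, so $D_{V}$ is infinite. In case (3), $a$ non-torsion and $b=ka$ force $(m-k)a=0$, hence $m=k$, and since $b=ka\ne 0$ there is no degree-zero map, so $D_{V}=\{k\}$.

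I expect the algebra to be routine and the degree-zero analysis to be the real obstacle. It is not a formality: because $0\in\langle a\rangle$ always, the assertion in (1) that $D_{V}$ is empty precisely when $b\notin\langle a\rangle$ already presupposes that $0\in D_{V}$ iff $b=0$, while in (3) one needs $0\notin D_{V}$ to keep the answer a singleton rather than $\{0,k\}$. Pinning down exactly when a degree-zero vertical map exists is therefore the step I would take most care over.
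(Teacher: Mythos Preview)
Your proof is correct and follows essentially the same route as the paper: both reduce the question to the integer solutions of $ka=b$ via Proposition~\ref{cal1}, and then read off the three cases.

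The one noteworthy difference is your treatment of the degree-zero case. The paper's proof simply says that (1) ``clearly follows from Proposition~\ref{cal1}'' and never explicitly discusses whether a degree-$0$ vertical map can occur; you are right that this is not a triviality, since Proposition~\ref{cal1} is stated only for $k\ne 0$. Your care here is well placed. That said, the barycentre retraction is more machinery than you need. The obstruction-theoretic argument given in the paper's proof of the implication $(2)\Rightarrow(1)$ in Proposition~\ref{cal1} never actually uses the hypothesis $k\ne 0$: it shows on the cochain level that $\beta=k\alpha$ for \emph{any} vertical map of fibre-degree $k$. In particular, a degree-$0$ vertical map forces $b=0\cdot a=0$ directly, without any fibrewise averaging. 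Conversely, when $b=0$ the product bundle $M\times S^1$ admits the obvious fibrewise-constant vertical map of degree~$0$, as you say. So your unified identity
\[
D_{V}(\tilde M_a,\tilde M_b)=\{k\in\mathbb{Z}\ |\ ka=b\}
\]
holds, but the $k=0$ half can be obtained by the same one-line obstruction calculation rather than by constructing a section via averaging.
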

 
 \begin{proof} \
 \begin{itemize}
\item[(1)] This clearly follows from Proposition \ref{cal1}.
 \item[(2)] Assume $l>0$ is the minimal integer such that $la=0$. Then $b=(k+ql)a$ for any integer $k+ql\ne 0$.
 By Proposition \ref{cal1}, there exists a  vertical  map   $\tilde M_{a}\to \tilde M_{b}$ of degree $k+ql$.
 \item[(3)]   By Proposition \ref{cal1}, there exists  a vertical  map   $\tilde M_{a}\to \tilde M_{b}$ of degree $k$.
 Assume there exists  a vertical  map   $\tilde M_{a}\to \tilde M_{b}$ of degree $k'$.
 Then by Proposition \ref{cal1}, $b=k'a$, which implies that $ka=k'a$. Since $a$ is not torsion, we have $k=k'$.
 \end{itemize}
 \end{proof}

\section{Finiteness of mapping degree sets between $S^1$-bundles}

First, we have the following straightforward consequence of Theorem \ref{main2}:
\begin{cor}\label{fiber3}
Suppose $M$ and $N$ are closed oriented aspherical $n$-manifolds with $\pi_1(N)$ SCF. Then 
$$D_{FP}(\tilde M_a,\tilde N_b)=  D(\tilde M_a,\tilde N_b).$$
\end{cor}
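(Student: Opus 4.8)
The plan is to establish the two inclusions separately, with essentially all of the substance supplied by Theorem \ref{main2}. The inclusion $D_{FP}(\tilde M_a,\tilde N_b)\subseteq D(\tilde M_a,\tilde N_b)$ is immediate, since a fiber-preserving map is in particular a map, so any degree realised by a fiber-preserving map is realised by some map. The content therefore lies in the reverse inclusion $D(\tilde M_a,\tilde N_b)\subseteq D_{FP}(\tilde M_a,\tilde N_b)$.

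For the reverse inclusion I would first record that $0$ belongs to both sets: we have $0\in D(\tilde M_a,\tilde N_b)$ via a constant map, while $0\in D_{FP}(\tilde M_a,\tilde N_b)$ already appears in the explicit description of $D_{FP}$ given in Theorem \ref{main2.1} (concretely, a degree-$0$ fiber-preserving map is obtained by composing the degree-$0$ vertical map $e\mapsto(p_1(e),1)$ into a trivialised fibre with a bundle map covering a constant base map). Then, given a nonzero $d\in D(\tilde M_a,\tilde N_b)$, I would choose a map $f\colon\tilde M_a\to\tilde N_b$ with $\deg(f)=d\neq 0$. The hypotheses of the corollary, namely that $M$ and $N$ are closed oriented aspherical $n$-manifolds with $\pi_1(N)$ SCF, are exactly those of Theorem \ref{main2} applied with $M_1=M$, $M_2=N$, $E_1=\tilde M_a$ and $E_2=\tilde N_b$. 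Hence $f$ is homotopic to a fiber-preserving map $\tilde f$, and since the degree is a homotopy invariant we obtain $\deg(\tilde f)=\deg(f)=d$, so that $d\in D_{FP}(\tilde M_a,\tilde N_b)$. Together the two inclusions give the desired equality.

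I do not expect a genuine obstacle here: all the difficulty has been absorbed into Theorem \ref{main2}, and the remaining argument is just homotopy-invariance of the degree. The only points demanding a little care are verifying that the corollary's hypotheses match those of Theorem \ref{main2} verbatim, and explicitly recording that $0$ lies in both sets, so that the asserted equality holds on the nose rather than merely for nonzero degrees.
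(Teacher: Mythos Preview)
Your proposal is correct and matches the paper's approach: the paper states this corollary as a ``straightforward consequence of Theorem~\ref{main2}'' without further proof, and your argument---the trivial inclusion $D_{FP}\subseteq D$, plus Theorem~\ref{main2} and homotopy-invariance of degree for the reverse inclusion on nonzero degrees---is exactly what is intended. Your extra care in checking that $0$ lies in both sets is a nice touch but not strictly needed given the explicit $\{0\}\cup\cdots$ in Theorem~\ref{main1}.
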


We now prove the Finiteness Theorem \ref{appl1}:

\begin{proof}[Proof of Theorem \ref{appl1}]
Let $\tilde f\colon \tilde M_a\to \tilde N_b$ be a map of non-zero degree. We will check that there are only finitely many possibilities for $\deg(\tilde f)$.
Since $N$ is a closed hyperbolic manifold, it is aspherical and $\pi_1(N)$ is SCF. By Corollary \ref{fiber3}, we may assume that $\tilde f$ is a fiber-preserving map and induces $f\colon M\to N$. By Theorem \ref{main1}, we have $\deg(\tilde f)=k \deg(f)$ and $ka=f^\#(b)$.

 Since $b$ is not torsion, and $\tilde f\colon \tilde M_a\to \tilde N_b$ is a map of non-zero degree, $a$ is not torsion by Lemma \ref{not torsion}, so there is at most one integer $k$ such that $ka=f^\#(b)$. To prove  that there are only finitely many possibilities for $\deg(\tilde f)$, we only need to prove that there are only finitely many possibilities for both $\deg (f)$ and $f^\#(b)$.

Since $N$ is a closed oriented hyperbolic $n$-manifold, its simplicial volume satisfies $\|N\| >0$  and  $|\deg(f)|\cdot \|N\|\le\|M\|$ for each map $f\colon M\to N$ \cite[6.1.4, 6.1.2]{Th}, thus $$|\deg(f)|\le \|M\|/\|N\|.$$ 
So $\deg(f)$ can only take finitely many values. Below we prove that there are only finitely many possibilities for $\tilde \beta=f^{\#}(b)$.

Since $H_2(M;\mathbb{Z})$ is finitely generated, we choose 
a finite generating set $\{\alpha_1,..., \alpha_m\}$  of $H_2(M;\mathbb{Z})$.
By the Universal Coefficient Theorem, each cohomology class $\tilde \beta\in H^2(M;\mathbb{Z})$ is determined by 
$\{\tilde \beta (\alpha_1),... , \tilde \beta (\alpha_m)\} \in \mathbb{Z}^m$, up to a finite ambiguity with size $|\text{Tor}(H^2(M;\mathbb{Z}))|.$

\smallskip

\noindent {\bf Claim:} For each $1 \le i \le m$, there are only finitely many possibilities for $f^\#(b) (\alpha_i)$.

\noindent {\em Proof of Claim}:  We have 
$$f^\#(b)(\alpha_i)=b(f_\#(\alpha_i)).$$
Thus, we only need to prove that there are only finitely many possibilities for $f_\#(\alpha_i)$.
Let 
$$L=\max\{\|\alpha_i\| \ | \ i=1, ... , k\}.$$
By the functorial property of the simplicial volume (cf.~\cite[p.8]{Gr}), we have
$$L\ge \|\alpha_i\|\geq \|f_\#(\alpha_i)\|.$$
Since $N$ is a closed orientable hyperbolic manifold, the simplicial volume is a genuine norm on the finite dimensional space $H_2(N,\mathbb{R})$, see \cite[Theorem 1.6]{CS}  for example. Hence there are only finitely many integer homology classes whose image in $H_2(N,\mathbb{R})$ has simplicial volume less or equal than $L$.
This proves the Claim, thus also proves the proposition.
\end{proof}

We end this subsection with the following result which is of independent interest and whose proof is contained in the proof of Theorem \ref{appl1}:
\begin{prop}\label{appl0}
 Suppose $M$ and $N$ are closed oriented $n$-manifolds and  $b\in H^2(N;\mathbb{Z})$ is not torsion
 such that 
 \begin{itemize}
 \item[(1)] $M$ and $N$ are aspherical and $\pi_1(N)$ is SCF; 
\item[(2)] $D(M, N)$ is a finite set;
 \item[(3)] $\{f^\#(b) \ | \  f: M\to N \ \text {is a  non-zero degree map}\}$ is a finite set.
\end{itemize} 
 Then   $D(\tilde M_a,\tilde N_b)$ is a finite set for any  $a\in H^2(M;\mathbb{Z})$.
\end{prop}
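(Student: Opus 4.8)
Proposition \ref{appl0} is the abstract engine behind the Finiteness Theorem \ref{appl1}, so the plan is to isolate exactly the three finiteness inputs that the proof of \ref{appl1} used and verify that they suffice in this axiomatized form. The proof will be essentially a re-reading of the argument for \ref{appl1}, with the geometric facts about hyperbolic manifolds (simplicial volume bounds) replaced by the hypotheses (2) and (3).

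Here is how I would carry it out. Let $\tilde f\colon \tilde M_a\to \tilde N_b$ be a map of non-zero degree; the goal is to bound the number of possible values of $\deg(\tilde f)$. First I would invoke hypothesis (1): since $M$ and $N$ are aspherical and $\pi_1(N)$ is SCF, Corollary \ref{fiber3} gives $D_{FP}(\tilde M_a,\tilde N_b)=D(\tilde M_a,\tilde N_b)$, so I may assume $\tilde f$ is fiber-preserving and induces a non-zero degree map $f\colon M\to N$. By Theorem \ref{main1} (equivalently Theorem \ref{main2.1}), there is an integer $k\ne 0$ with
$$\deg(\tilde f)=k\cdot\deg(f),\qquad f^\#(b)=ka.$$
Next I would use that $b$ is not torsion: by Lemma \ref{not torsion} the class $a$ is not torsion either, so the equation $f^\#(b)=ka$ determines $k$ uniquely once $f^\#(b)$ is fixed (an element of the free part of $H^2(M;\mathbb{Z})$ has at most one integer multiple realizing a given class). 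Therefore $\deg(\tilde f)=k\cdot\deg(f)$ is controlled once I bound the finitely many possible values of the pair $(\deg(f),\,f^\#(b))$.

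Finally I would close the argument by feeding in the two remaining hypotheses directly. Hypothesis (2) says $D(M,N)$ is finite, so $\deg(f)$ ranges over finitely many integers. Hypothesis (3) says the set $\{f^\#(b)\mid f\colon M\to N\text{ of non-zero degree}\}$ is finite, so $f^\#(b)$, and hence the uniquely determined $k$, ranges over finitely many values. Combining, $\deg(\tilde f)=k\cdot\deg(f)$ takes only finitely many values, which is exactly the assertion that $D(\tilde M_a,\tilde N_b)$ is finite.

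I do not expect a genuine obstacle here, since the proposition is explicitly extracted from the proof of Theorem \ref{appl1}: the only nontrivial inputs in that proof were (a) the reduction to fiber-preserving maps, which is Corollary \ref{fiber3} and needs hypothesis (1); (b) the finiteness of $|\deg(f)|$ via simplicial volume, now replaced by hypothesis (2); and (c) the finiteness of the values $f^\#(b)$ via the simplicial-norm bound on $f_\#(\alpha_i)$, now replaced wholesale by hypothesis (3). The one point that deserves a careful sentence rather than a routine wave is the uniqueness of $k$: this is where non-torsionality of $a$ (deduced from that of $b$ through Lemma \ref{not torsion}) is essential, as otherwise a single value of $f^\#(b)$ could correspond to infinitely many $k$ and the conclusion would fail.
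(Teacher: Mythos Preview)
Your proposal is correct and matches the paper's proof essentially step for step: reduce to fiber-preserving via Corollary~\ref{fiber3} using hypothesis~(1), apply Theorem~\ref{main1} to write $\deg(\tilde f)=k\cdot\deg(f)$ with $ka=f^\#(b)$, use Lemma~\ref{not torsion} to see $a$ is non-torsion so $k$ is uniquely determined, and then invoke hypotheses~(2) and~(3) to bound $\deg(f)$ and $f^\#(b)$. Your final paragraph correctly diagnoses the one place where care is needed (uniqueness of $k$), exactly as in the paper.
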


\begin{proof}
Suppose $\tilde f\colon \tilde M_a\to \tilde N_b$ is a map of non-zero degree for some $a\in H^2(M;\mathbb{Z})$.
By (1) in this proposition and Corollary \ref{fiber3}, we may assume that $\tilde f$ is a fiber-preserving map and it induces $f\colon M\to N$.
By Theorem \ref{main1}, we have $\deg(\tilde f)=k \deg(f)$, and $ka=f^\#(b)$.
Since $b$ is not torsion, $a$ is not torsion by Lemma \ref{not torsion}, and there is at most one integer $k$ such that $ka=f^\#(b)$.
 So, in order to prove  that there are only finitely many possibilities for $\deg(\tilde f)$,
 we only need to prove that there are only finitely many possibilities for both $\deg (f)$ and $f^\#(b)$.
 These are exactly conditions (2) and (3) in this proposition.
\end{proof}

\section{Realizing finite sets of integers as mapping degree sets}
 
 In this section, we will prove the Realization Theorem \ref{C}. 
 First, we prove Theorem \ref{appl2}.
 
\begin{proof}[Proof of Theorem \ref{appl2}]
Suppose $\tilde f\colon \tilde N_{mb}\to \tilde N_{kb}$ is a map of non-zero degree for some non-zero integer $k$. Since $N$ is aspherical and has SCF $\pi_1$,
by Corollary \ref{fiber3}, we may assume that $\tilde f$ is a fiber-preserving map that induces $f\colon N\to N$.
By Theorem \ref{main1}, we have $$\deg(\tilde f)=l \deg(f) \  \text{and} \ l(mb)=f^\#(kb) \qquad (4.0)$$
for some non-zero integer $l$.
Since $\tilde f$ is of non-zero degree, $f$ is also of non-zero degree. 

\smallskip

\noindent (1) Since $D(N)$ is finite, we have $\deg(f)=\pm 1$.   By (4.0), we have $\deg(\tilde f)=\pm l$, and  $$lmb=f^\#(kb)=kf^\#(b), \qquad (4.1),$$  so we obtain 
$$f^\#(b)=\frac {ml}k b=\lambda b \in H^2(N; \mathbb{Q}).\qquad (4.2)$$
We will show that the rational number $\lambda$ is $\pm 1$. Since $f\colon N\to N$ is a degree $\pm1$ map, it induces surjections $f_\#\colon H_i(N;\mathbb{Z})\to H_i(N;\mathbb{Z})$ for all $i\ge 0$.
Since each self-surjection on each finitely generated Abelian group is an isomorphism, 
 $f$ induces isomorphisms $f_\#\colon H_i(N;\mathbb{Z})\to H_i(N;\mathbb{Z})$ for all $i\ge 0$. By algebraic duality, $f$ induces isomorphisms $f^\#\colon H^i(N;\mathbb{Z})\to H^i(N;\mathbb{Z})$ for all $i\ge 0$, and  in particular $f^\#\colon H^2(N;\mathbb{Z})\to H^2(N;\mathbb{Z})$ is an isomorphism. Note that (4.2) implies that $b$ is an eigenvector of $f^\#\colon H^2(N;\mathbb{Z})/tors\to H^2(N;\mathbb{Z})/tors$ with rational eigenvalue $\lambda$. 
Since $f^\#$ is an isomorphism, that is $f^\# \in GL(\beta_2(N),\mathbb{Z})$,  the characteristic polynomial of $f^\#$ is an integer polynomial with leading coefficient 1 and constant  $\pm 1$. 
So  this rational eigenvalue $\lambda$ has to be $\pm 1$, i.e.  $f^\#(b)=\pm b$. By (4.1), we obtain $kb=\pm mlb$.
Since $b$ is not torsion,
we have $k=\pm ml$, that is $k$ is a multiple of $m$, and  $\deg(\tilde f)\in \{\pm k/m\}$. Then $$D(\tilde N_{mb},\tilde N_{kb})\subset \{0, \pm k/m\}.$$
In particular,
$$D(\tilde N_b)=D(\tilde N_b,\tilde N_{b})\subset \{0, \pm 1\}.$$

\smallskip

\noindent (2) Now we have that $\deg (f)=1$ and  $f^{\#}(b)=b$. 
 Assume that $D(\tilde N_{mb},\tilde N_{kb})$ contains an non-zero integer $l$. By (4.0), applying the same argument  to the present situation, we have $$l\cdot mb=f^{\#}(kb)=kf^{\#}(b)=kb,$$
Since $b$ is not torsion,  we have  $k=ml$, that is $k$ is an integer multiple of $m$, and $l=k/m$.
So $D(\tilde N_{mb},\tilde N_{kb})$ is 
       $\{0,k/m\}$ if $k$ is a multiple of $m$, and is  
       $\{0\}$ otherwise.

Now we prove the ``Moreover" part of this theorem.
Since $N$ is a closed oriented hyperbolic $n$-manifold,  $N$ is aspherical, $\pi_1(N)$ SCF, and $\|N\|>0$.
Thus $\tilde f$ induces a map $f\colon N\to N$ with $|\deg(f)|=1$.
By  \cite[Theorem 6.4]{Th},  every map $f\colon N\to N$ of $|\deg(f)|=1$ is homotopic to an isometry when $n\geq3$.
Since the order of $\text{Isom} (M)$ is odd, $f$ is homotopic to an isometry of odd order, and so  $\deg(f)=1$. 
We may assume that $f^r=id$ for some odd $r$.

Again by  (4.2) $ml/k$ is a real eigenvalue of $f^\#: H^2(N, \mathbb Q)\to H^2(N,\mathbb Q)$. 
Since $(f^\#)^r =id$, $ml/k$ is a $r$-th root of unity. 
Since $r$ is odd, we have $ml/k=1$, that is $k=ml$, so
$f^\#(b)=b.$
\end{proof}

Next, we rewrite in a more detailed form Theorem \ref{0,k} and prove it:

\begin{thm}[Theorem \ref{0,k}]\label{4.0,k}
For $n=4, 5$,  there exists a closed, orientable aspherical $(n-1)$-manifold $N$ with SCF fundamental group, such that $D(N)=\{0,1\}$, its second Betti number satisfies $\beta_2(N)\geq 1$, and for each self-map $f\colon N\to N$ of degree one, it holds $f^{\#}(b)=b$ for any non-torsion class $b\in H^2(N;\mathbb{Z})$.

In particular, for $n=4, 5$,  there exists a closed, orientable aspherical $(n-1)$-manifold $N$ and a  non-torsion  class $b\in H^2(N;\mathbb{Z})$, such that for any non-zero integer $k$, we have
$D(\tilde N_b,\tilde N_{kb})=\{0, k\}$, and  $D(\tilde N_{lb},\tilde N_{kb})=\{0\}$ if $k$ is not an integer multiple of $l$.
 \end{thm}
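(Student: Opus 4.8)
The plan is to produce the base manifold $N$ as a closed oriented hyperbolic manifold of dimension $n-1\in\{3,4\}$ with $\beta_2(N)\geq 1$ whose full isometry group $\mathrm{Isom}(N)$ is \emph{trivial}, and then to read off both displayed conclusions from Theorem \ref{appl2}(2) and its proof. I would insist on a trivial isometry group rather than merely an odd-order one, because this is exactly what upgrades the eigenvalue argument in Theorem \ref{appl2}(2) to the stronger assertion needed here, namely that a degree-one self-map fixes \emph{every} non-torsion class in $H^2$, not just a single preferred one.

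Granting such an $N$, I would check the four stated properties directly. Hyperbolicity makes $N$ aspherical with $\pi_1(N)$ non-elementary hyperbolic, hence SCF. Since $\|N\|>0$, every self-map satisfies $|\deg(f)|\leq 1$, and by \cite[Theorem 6.4]{Th} any self-map of degree $\pm1$ is homotopic to an isometry, hence to $\mathrm{id}_N$ because $\mathrm{Isom}(N)$ is trivial; in particular there is no degree $-1$ self-map, so $D(N)=\{0,1\}$, and every degree-one self-map has $f^{\#}=\mathrm{id}$ on $H^2(N;\mathbb{Z})$, giving $f^{\#}(b)=b$ for all $b$. As $\beta_2(N)\geq 1$, a non-torsion class $b\in H^2(N;\mathbb{Z})$ exists. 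The ``in particular'' clause is then immediate from Theorem \ref{appl2}(2): with $m=1$ it yields $D(\tilde N_b,\tilde N_{kb})=\{0,k\}$, and with $m=l$ it yields $D(\tilde N_{lb},\tilde N_{kb})=\{0\}$ whenever $l\nmid k$.

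Everything therefore reduces to exhibiting the hyperbolic base manifolds. Dimension $3$ is routine: there are plenty of closed oriented hyperbolic $3$-manifolds with $\beta_1>0$ (so $\beta_2=\beta_1>0$ by Poincar\'e duality) and trivial isometry group --- e.g.\ a fibered census example with no symmetries --- which settles $n=4$. The case $n=5$, asking for a closed oriented hyperbolic $4$-manifold with $\beta_2>0$ and trivial isometry group, is the main obstacle and is where I expect most of the effort to go. I would begin with a closed hyperbolic $4$-manifold $N_0$ having $\beta_2(N_0)>0$ (available among arithmetic examples of simplest type, or in the Ratcliffe--Tschantz lists) and pass to an \emph{asymmetric} finite cover $N\to N_0$; rational Betti numbers do not drop under finite covers by transfer, so $\beta_2(N)\geq\beta_2(N_0)>0$ is retained. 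The delicate step is the existence of such a cover: for non-arithmetic $N_0$ the commensurator is discrete by Margulis, and one can choose a finite-index subgroup of $\pi_1(N_0)$ self-normalizing in the commensurator, killing every isometry of the cover; for arithmetic $N_0$ one can instead invoke the Belolipetsky--Lubotzky construction of closed hyperbolic $4$-manifolds with prescribed (here trivial) isometry group and arrange $\beta_2>0$ within that scheme. Once one such $4$-manifold is produced, the theorem follows exactly as above.
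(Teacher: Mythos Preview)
Your strategy is sound and differs from the paper's in both dimensions.

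For $n=4$ the paper does \emph{not} use a hyperbolic $3$-manifold. It glues two non-homeomorphic hyperbolic knot complements $E(K_1),E(K_2)$ along their boundary tori (matching Seifert-surface boundaries), obtaining a Haken manifold $M$ whose JSJ decomposition is exactly this splitting and with $H_2(M;\mathbb{Z})\cong\mathbb{Z}$. Any degree-$\pm1$ self-map is homotopic to a homeomorphism preserving the JSJ pieces; taking $K_1=8_{20}$ so that $\mathrm{Isom}(E(K_1))=\{\mathrm{id}\}$ forces the restriction to $E(K_1)$ to be the identity, hence $\deg(f)=1$ and $f_\#=\mathrm{id}$ on $H_1$, whence $f^\#=\mathrm{id}$ on $H^2$ by Poincar\'e duality. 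Your hyperbolic alternative is cleaner---Mostow rigidity replaces Waldhausen and the JSJ machinery---at the price of having to name an asymmetric closed hyperbolic $3$-manifold with $\beta_1>0$; such examples certainly exist, so this works.

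For $n=5$ the paper also invokes Belolipetsky--Lubotzky but asks only for \emph{odd-order} isometry group. The device you are missing is how the paper secures $\beta_2>0$: it takes $\mathrm{Isom}(M_i)\cong\mathbb{Z}_{2i+1}$ for $i=1,2,\ldots$, obtaining infinitely many pairwise-distinct closed hyperbolic $4$-manifolds; Wang's finiteness theorem then forces unbounded volume, Gauss--Bonnet gives unbounded $\chi$, hence unbounded $\beta_2$, so some $M_i$ has $\beta_2>0$. With only odd order available, the paper then appeals to the eigenvalue argument in the ``Moreover'' clause of Theorem~\ref{appl2}. You are right that insisting on $\mathrm{Isom}(N)=\{e\}$ is what directly yields $f^\#=\mathrm{id}$ on all of $H^2$ (the odd-order argument by itself only pins down rational eigenvalues of $f^\#$, not the full action), but this stricter demand forfeits the family-over-$i$ trick, since the cited BL theorem produces one manifold per prescribed group. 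Your two proposed workarounds---transfer under an asymmetric finite cover of a $\beta_2$-positive base, or arranging $\beta_2>0$ within the BL construction---are both plausible, but neither is justified as written; this is the one genuine gap in your outline.
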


\begin{proof}
We split the proof into the cases $n=4$ and $n=5$.

\smallskip

\noindent (1) {\em The case $n=4$}.  Let $K_1$ and $K_2$ be two hyperbolic knots in the 3-sphere $S^3$, such that $\mathrm{Isom}(E(K_1))=\{id\}$  and $E(K_1)$ and $E(K_2)$ are not homeomorphic to each other.
Here $E(K_i)$ denotes the knot complement $S^3\setminus N(K_i)$. Let $S_1$ and $S_2$ be Seifert surfaces of $K_1$ and $K_2$ respectively. One can choose $K_1=8_{20}$ in the Appendix of \cite{Ad}, which is hyperbolic and has no symmetry \cite[Table 1]{BZ}. 
Let $$M=E(K_1)\cup_\phi E(K_2)$$ be the closed oriented 3-manifold obtained by taking an orientation reversing 
homeomorphism $\phi\colon\partial E(K_1)\to \partial E(K_2)$ such that $\phi (\partial S_1)=\partial S_2$. By classical results in 3-manifold topology, the following statements hold.
\begin{itemize}
\item[(i)] $H_2(M;\mathbb{Z})=\mathbb{Z}$, and indeed it is generated by $S=S_1\cup S_2$.
\item[(ii)] $M=E(K_1)\cup_{\phi}E(K_2)$ gives the JSJ decomposition of $M$, where the JSJ torus $T$ is the image of $\partial E(K_i)$. Note that $T$ is separating in $M$, and $M$ is Haken. In particular, $M$ is aspherical and $\pi_1(M)$ is SCF.
\item[(iii)] The simplicial volume of $M$ satisfies $\|M\|= \|E(K_1)\|+\|E(K_2)\|>0$.
\end{itemize}

Suppose $f\colon M\to M$ is a map of non-zero degree. Since $\|M\|>0$, we have $\deg(f)=\pm 1$.
Since $M$ is Haken, $f$ is homotopic to a homeomorphism. By the JSJ theory, we may assume that 
the JSJ decomposition $M=E(K_1)\cup_{\phi}E(K_2)$ is invariant under $f$. Since $E(K_1)$ and $E(K_2)$ are not homeomorphic to each other, each $E(K_i)$ is invariant under $f$ for $i=1,2$.
Since $E(K_1)$ is a hyperbolic knot, we may assume that $f|_{E(K_1)}$ is a self-isometry, 
and we have that $f|_{E(K_1)}$ is the identity since $\mathrm{Isom}(E(K_1))=\{id\}$.

So we have $\deg (f)=1$, and $f$ maps the oriented meridian of $E(K_1)$ to itself. Since $H_1(M;\mathbb{Z})\cong \mathbb{Z}$ is generated by the oriented meridian of $E(K_1)$, $f_{\#}\colon H_1(M;\mathbb{Z})\to H_1(M;\mathbb{Z})$ is the identity. Then Poincar\'e duality and the fact that $\text{deg}(f)=1$ imply that $f^\#\colon H^2(M;\mathbb{Z})\to H^2(M;\mathbb{Z})$ is the identity.
Therefore $f^\#(b)=b$ for any $b\in H^2(M;\mathbb{Z})$.

\smallskip
 
\noindent (2) {\em The case $n=5$}. By a result of Belolipesky and Lubotszky \cite[Theorem 1.1]{BL}, for each $k\geq2$, and any finite group $\Gamma$, there exists a closed hyperbolic $k$-manifold $M$ such that $\mathrm{Isom}(M)=\Gamma$. Indeed $M$ is an orientable manifold, which is observed by Weinberger  (see \cite[Section 3]{Mu}).

Let $M_i$ be a closed oriented hyperbolic $k$-manifold such that $\mathrm{Isom}(M_i)\cong\Z_{2i+1}$, the cyclic group of order 
$2i+1$. Then the family $\{M_i\}$ contains infinitely many hyperbolic $k$-manifolds.
If $k>3$, it follows from H. C. Wang's theorem \cite{Wa} that $\{\text{Vol}(M_i)\}$ is unbounded.
Here we take $k=n-1=4$. By the Gauss-Bonnet Theorem,  $\{\chi(M_i)\}$,  the set of Euler characteristics of $M_i$, is unbounded from above.
Hence $\{\beta_2(M_i)\}$, the second Betti numbers of those $M_i$, are unbounded.
So there exists a hyperbolic 4-manifold $M$ such that the order of $\mathrm{Isom}(M)$ is odd and $\beta_2(M)>0$. The conclusion
$f^{\#}(b)=b$ for any non-torsion class $b\in H^2(N;\mathbb{Z})$ follows from the condition that $M$ is a closed orientable hyperbolic $4$-manifold
with odd order $\mathrm{Isom}(M_i)$ and the ``Moreover" part of Theorem  \ref{appl2}.

\smallskip

Clearly in each case, $N$ is aspherical and $\pi_1(N)$ is SCF. Now the ``In particular" part of this theorem   follows from Theorem \ref{appl2} (2) and the first part of this theorem:
Since $\beta_2(N)>0$, $H^2(N;\mathbb{Z})$ contains a non-torsion element.
\end{proof}

Finally we are ready to prove the Realization Theorem C.

\begin{proof}[Proof of Theorem \ref{C}]
For $n=3$, the theorem follows from Theorem \ref{t:CMV} (cf.~\cite{CMV}). For $n\geq 6$, the theorem follows from Theorem \ref{extension} (cf.~\cite{NSTWW}).  For $n=4,5$, by implementing Theorem \ref{4.0,k} (Theorem \ref{0,k}), the proof follows by the same strategy as in \cite{CMV}. So we only give an outline of the proof here. 

For any finite set $A\subset \mathbb{Z}$ that contains $0$, by Proposition 2.2 of \cite{CMV}, there exist a finite sequence $\{B(i)\}_{i=1}^k$, where each $B(i)$ is a finite sequence of integers such that $$A=\bigcap_{i=1}^k S_{B(i)}.$$ Here $S_{B(i)}$ denotes the set of sums of subsequences of $B(i)$, which includes $0$, as the sum of the empty subsequence.

For $n=4,5$, take a closed, oriented, aspherical $(n-1)$-manifold $N$ as in Theorem \ref{4.0,k}, and take a non-torsion element $b\in H^2(N;\mathbb{Z})$. We take distinct prime numbers $p_1,..., p_k$ that are greater than absolute values of all numbers in all the sequences $B(i)$, with $i=1,...,k$. 

For each $i=1,...,k$, let $\alpha_i=p_i\cdot \prod_{\beta\in B_i} \beta$. Then we construct the following closed oriented $n$-manifolds: $$N_i=\tilde{N}_{\alpha_ib},\ M_i=\#_{\beta\in B(i)}\tilde{N}_{\frac{\alpha_i}{\beta}b},$$ where $N_i$ is aspherical. By Lemma 3.5 of \cite{NWW} and Theorem \ref{4.0,k}, we have 
$$D(M_i,N_i)=S_{B(i)}, \, \text{and} \, D(M_i,N_j)=\{0\}\,  \, if \,  i\ne j.$$

Then Proposition 3.7 of \cite{CMV} implies the existence of an integer $l\geq 0$, such that $$D\Big((\#_{i=1}^k M_i)\#(\#^l S^{n-1}\times S^1),\#_{i=1}^kN_i\Big)=\bigcap_{i=1}^kD(M_i,N_i)=\bigcap_{i=1}^kS_{B(i)}=A.$$
\end{proof}

\bibliographystyle{ams}

\end{document}